\documentclass{amsart}
\usepackage{amssymb}
\usepackage{pdflscape}
\usepackage{amscd}
\usepackage{fancyhdr}
\usepackage[backref=page]{hyperref}
\usepackage{amsmath}
\usepackage{amsthm}
\usepackage{amsfonts}
\usepackage{amssymb}
\usepackage{euscript}
\usepackage[all]{xy}
\usepackage{tikz}
\usepackage{tikz-cd}

\tikzcdset{scale cd/.style={every label/.append style={scale=#1},
		cells={nodes={scale=#1}}}}

\newcommand{\bd}{\mathbf{d}}

\newcommand{\be}{\mathbf{e}}

\newcommand{\degg}{\le_{\rm deg}}
\newcommand{\bdim}{{\rm\bf dim}}

\newcommand{\nc}{\newcommand}

\nc{\la}{\lambda}
\nc{\al}{\alpha }
\nc{\om}{\omega }

\nc{\veps}{\varepsilon}
\nc{\ch}{{\mathop {\rm ch}}}
\nc{\Tr}{{\mathop {\rm Tr}\,}}
\nc{\Id}{{\mathop {\rm Id}}}
\nc{\bra}{\langle}
\nc{\ket}{\rangle}
\nc{\pa}{\partial}
\nc{\ld}{\ldots}
\nc{\cd}{\cdots}
\nc{\hk}{\hookrightarrow}
\nc{\T}{\otimes}
\nc{\mgl}{\mathfrak{gl}}
\nc{\U}{\mathrm U}

\newtheorem{theoremA}{Theorem}

\newtheorem{theorem}{Theorem}[section]

\newtheorem{example}[theorem]{Example}
\newtheorem{definition}[theorem]{Definition}

\numberwithin{equation}{section}
\newtheorem{thm}{Theorem}[section]
\newtheorem{prop}[thm]{Proposition}
\newtheorem{lem}[thm]{Lemma}
\newtheorem{cor}[thm]{Corollary}
\theoremstyle{remark}
\newtheorem{rem}[thm]{Remark}

\newcommand{\bC}{{\mathbb C}}

\newcommand{\bZ}{{\mathbb Z}}

\nc{\Gr}{{\mathop {\rm Gr}}}

\title{Extremality of principal quiver Grassmannians}

\author{Giovanni Cerulli Irelli}
\address{Giovanni Cerulli Irelli:\newline
Sapienza Universit\'a di Roma; Dipartimento di Scienze di Base e Applicate per
l’Ingegneria, Via Antonio Scarpa 14, 00161 Roma, Italy}
\email{giovanni.cerulliirelli@uniroma1.it}

\author{Evgeny Feigin}
\address{Evgeny Feigin:\newline
School of Mathematical Sciences, Tel Aviv University, Tel Aviv
6997801, Israel}
\email{evgfeig@gmail.com}

\author{Markus Reineke}
\address{Markus Reineke:\newline
Ruhr-Universit\"at Bochum, Faculty of Mathematics, Universit\"atsstra{\ss}e 150, 44780 Bochum, Germany}
\email{Markus.Reineke@ruhr-uni-bochum.de}

\begin{document}

\begin{abstract}
We study families of quiver Grassmannians, parametrizing subrepresentations of representations of Dyn\-kin quivers, which are naturally associated to a projective and an injective representation. Our main result is an explicit description of the flat irreducible locus of such a family. More precisely, we prove that a member of this family is irreducible and of the expected dimension if and only if the ambient quiver representation degenerates to the direct sum of the defining projective and injective representations.
\end{abstract}

\maketitle

\section{Introduction}
Let $Q$ be a quiver and let $M$ be a representation of $Q$. Given a dimension 
vector $\be$, we consider the quiver Grassmannian $\Gr_\be(M)$, which parametrizes
subrepresentations of $M$ of dimension vector $\be$. By definition, $\Gr_\be(M)$ is a
projective algebraic variety naturally embedded into a product of classical
Grassmann varieties. Various properties of quiver Grassmannians can be studied
via classical algebro-geometric methods \cite{LW19}, but one can also exploit 
group-theoretic
and algebraic constructions from quiver representation theory (see \cite{CI20,CFR12,Fe23}).
Hence, if a variety admits an explicit realization as a quiver Grassmannian,
one has at hand additional tools for studying its algebro-geometric properties 
(see \cite{CFR13-1,Fe11,FFR17}).

Yet another advantage of a quiver Grassmannian realization is that one immediately
gets a family of varieties, with the initial one being its member. Such families
naturally show up when one varies the ambient $Q$-representation $M$, leading to degenerations
(if $\Gr_\be(M)$ is a general member of the family) or deformations (if $\Gr_\be(M)$ is 
not in general position) of the initial projective variety.

Since every projective variety can be realized as a quiver Grassmannian \cite{Re13,Ri18},
there is no hope to study all quiver Grassmannians together. In this paper we consider what we call the
 PrIncipal case for Dynkin quivers $Q$ (here Pr stands for projective and In 
for injective). More precisely, let $P$ and $I$ be a projective and an injective representation
of $Q$, respectively. We are interested in the varieties of the form $\Gr_{\bdim(P)}(M)$, where 
$\bdim(M) = \bdim(P) + \bdim(I)$; in particular, $\Gr_{\bdim(P)}(P\oplus I)$ is a distinguished 
member of this family. In the equioriented type $A$ case, these varieties for various 
$P$,$I$, and $M$ provide quiver Grassmannian realizations of classical and PBW degenerate 
flag varieties \cite{CFR12,Fe12}; the universal families over the corresponding 
representation spaces (parametrizing representations $M$) were studied in 
\cite{CFFFR17,CFFFR20}.  Since $Q$ is Dynkin, by Gabriel's theorem, there are finitely many isoclasses of representation of $Q$
of dimension vector $\bdim(P) + \bdim(I)$.  One knows \cite{CFR12} that 
for a generic representation
$M$ the quiver Grassmannian $\Gr_{{\rm\bf dim} P}(M)$ is irreducible of dimension 
$[P,I]$, where $[P,I]$ is the dimension of the space of homomorphisms from $P$ to $I$.
For a non-generic $M$, the dimension of $\Gr_{{\rm\bf dim} P}(M)$ can be larger and several irreducible
components may show up. It is thus natural to ask for the description of the flat 
irreducible locus -- the locus of representations $M$ such that $\Gr_{\dim P}(M)$ 
is irreducible and of the minimal possible dimension. 

In order to formulate our  main theorem, let us
introduce the following notation. Let $P_{\rm min}$ be the direct sum of all indecomposable
projectives $P_i$ with $i$ being a source in $Q$. Let $I_{\rm min}$ be the direct sum of 
all indecomposable injectives $I_i$, such that $i$ is a sink in $Q$. Hence,
$P_{\rm min}$ (resp.~$I_{\rm min}$) is the minimal projective (resp.~injective) representation,
whose dimension vector has support $Q$. We prove the following extremality property
of the representations $P\oplus I$.

\begin{theoremA}
Assume that $P_{\rm min}$ is a direct summand of $P$ and $I_{\rm min}$ is a direct summand of $I$. 
Then  the quiver Grassmannian $\Gr_{{\rm\bf dim} P}(M)$ is irreducible of dimension 
$[P,I]$
if and only if $M$ degenerates to $P\oplus I$.
\end{theoremA}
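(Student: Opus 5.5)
We will prove the two implications separately, using the stratification of $\Gr_{\bdim P}(M)$ by isomorphism classes of pairs $(N, M/N)$.

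\emph{Strata and their dimensions.} For a point $N\subset M$ with $N\cong A$ and $M/N\cong B$, $\bdim A=\bdim P$, $\bdim B=\bdim I$, the stratum $\mathcal S_{[A,B]}$ of such subrepresentations is nonempty exactly when $M$ is the middle term of an extension of $B$ by $A$. Its dimension is
\[
\dim \mathcal S_{[A,B]}=[A,M]-[A,A].
\]
On it, the map "take the quotient" has fibres of the expected size, which gives the standard formula
\[
\dim \mathcal S_{[A,B]} = [A,B] + \bigl(\text{correction by }{\rm Ext}\text{-terms}\bigr).
\]

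\emph{Lower bound.} By upper semicontinuity of fibre dimension of the universal Grassmannian over the representation space $R_{\bdim P+\bdim I}$, every nonempty $\Gr_{\bdim P}(M)$ has dimension at least $[P,I]$. This is the generic value by \cite{CFR12}, together with the Euler form identity $\langle \bdim P,\bdim I\rangle=[P,I]$, since ${\rm Ext}^1(P,I)=0$.

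\emph{($\Leftarrow$).} Suppose $M$ degenerates to $P\oplus I$, i.e. $P\oplus I\in\overline{O_M}$. Then the fibre over $P\oplus I$ controls the fibres over all representations whose orbit closure contains it. The plan:

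1. Show directly that $\Gr_{\bdim P}(P\oplus I)$ is irreducible of dimension $[P,I]$. I would do this by decomposing into strata indexed by subrepresentations $N\cong A$ with $A$ a degeneration-related subobject. The hypotheses $P_{\min}\mid P$ and $I_{\min}\mid I$ make $P$ "sincere enough" that the stratum $\{N: N\cong P\}$ is open, isomorphic to $\mathrm{Hom}(P,I)$ (graphs of maps $P\to I$), and dense. Density comes from a codimension estimate on every other stratum, of the form
\[
\dim \mathcal S_A \le [P,I]-\bigl([A,A]-[P,A]\bigr)+\ldots,
\]
and this estimate is where the minimal summands are needed.

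2. Use the universal family $\pi:\mathcal Y\to R$. It is projective, and its generic fibres are irreducible of dimension $[P,I]$. Moreover the fibres over points degenerating to $P\oplus I$ are bounded by the fibre over $P\oplus I$, by semicontinuity along $\overline{O_M}$ restricted to the flat locus. Hence $\Gr_{\bdim P}(M)$ has dimension exactly $[P,I]$. Irreducibility then follows because every component of every fibre must contain graph-type points; I would argue this by showing that the open subset of $\mathcal Y$ where $N\cong P$ surjects onto this open region of $R$ with irreducible fibres, and that its complement has fibres of dimension less than $[P,I]$.

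\emph{($\Rightarrow$).} Suppose $M$ does not degenerate to $P\oplus I$. We exhibit a stratum of dimension $\ge [P,I]$ other than the $N\cong P$ stratum, or show that the $N\cong P$ stratum is empty. The key observation: if some $N\subset M$ has $N\cong P$ and $M/N\cong I$, then $M$ is an extension of $I$ by $P$, hence degenerates to $P\oplus I$. So in our case every point has $(N,M/N)\neq(P,I)$. Now choose a pair $(A,B)$ realised in $M$ with $A$ a degeneration of $P$... (or $P$ a degeneration of $A$). Its stratum has dimension $\ge [P,I]$ by the lower bound, so it alone would have to be the whole irreducible variety. I then need to show this forces a contradiction, by producing a second stratum of the same dimension, e.g. by perturbing using the sink/source summands. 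This is the main obstacle: the combinatorial comparison of $[A,M]-[A,A]$ against $[P,I]$, and constructing the competing component. I would first try the case where $M$ is an extension $0\to A\to M\to B\to 0$ of the generic type, using Bongartz's criterion $[X,M]\ge[X,P\oplus I]$ for all $X$. Failure of this criterion for some indecomposable $X$ should then yield an extra component, namely subrepresentations containing the image of $X$, whose dimension can be computed to be $\ge[P,I]$.
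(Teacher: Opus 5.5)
You prove ($\Leftarrow$) correctly, and in substance as the paper does. You combine upper semicontinuity of fibre dimension in the universal Grassmannian with the fact, proved in \cite{CFR12} for \emph{arbitrary} $P$ and $I$, that $\Gr_{\bdim P}(P\oplus I)$ is irreducible of dimension $[P,I]$. Your irreducibility argument is a valid substitute for the semicontinuity of the number of top-dimensional components that the paper quotes from \cite{CFFFR17}: the complement of the open locus $\{N\cong P\}$ is closed, so semicontinuity propagates the bound $<[P,I]$ on its fibres from $P\oplus I$; moreover every component of every fibre has dimension $\ge[P,I]$, and $\mathcal{S}_{[P]}$ is irreducible. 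However, your step 1 should simply cite \cite{CFR12}. The sketched codimension estimate there is not a proof, and the minimal summands play no role in this direction, so you have placed the hypotheses on the wrong side.

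The genuine gap is ($\Rightarrow$). You leave it as ``the main obstacle'', yet it is the actual content of the theorem and the only place where $P_{\rm min}\mid P$ and $I_{\rm min}\mid I$ are used. Attacking an arbitrary non-degenerating $M$ directly cannot work as sketched: for very degenerate $M$ (e.g.\ semisimple) $P$ does not even embed. Several intermediate claims are also wrong:
\begin{itemize}
\item the lower bound $[P,I]$ holds for irreducible components, not for an arbitrary stratum;
\item ``$A$ a degeneration of $P$'' is vacuous, since $P$ is rigid and every representation of dimension vector $\bdim P$ is a degeneration of it;
\item $[A,M]-[A,A]$ is $\dim\mathcal{S}_{[A]}$, not the dimension of the stratum with prescribed quotient.
\end{itemize}

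The first missing idea is a reduction to finitely many boundary representations. A minimal-dimensional $U$ with $[U,M]>[U,P\oplus I]$ is almost projective, so the non-degenerating locus is the union of the sets $\mathcal{A}_U=\{M:[U,M]>[U,I]\}$. For almost projective $U$, the resolution $P_1(U)\to P_0(U)$ involves only arrows from the sources of the support of $U$ to the vertices of $P_1(U)$. Hence the condition is a rank condition on a block matrix whose blocks are independent arrow matrices $M_\alpha$, so $\mathcal{A}_U$ is irreducible and $\mathcal{A}_U=\overline{G_\bd M_U}$. By semicontinuity of dimension and of the number of top-dimensional components in flat families, it then suffices to treat the $M_U$.

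The second missing idea is to exhibit two distinct strata of dimension $[P,I]$ in $\Gr_{\bdim P}(M_U)$:
\begin{itemize}
\item $\mathcal{S}_{[P]}$: here $I_{\rm min}\mid I$ gives $[U,{\rm gr}(I)]>[U,I]$, hence $M_U\degg P\oplus{\rm gr}(I)$, and $P\hookrightarrow M_U$ by Bongartz's embedding criterion since $P$ is projective.
\item $\mathcal{S}_{[N_U]}$ with $N_U=P''\oplus P_1(U)\oplus V$: here $P'(U)\supset P_0(U)$ is a summand of $P$ built using $P_{\rm min}\mid P$, and $V=P'(U)/P_1(U)$ is an extension of $R=P'(U)/P_0(U)$ by $U$. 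The vanishings $[R,P]^1=[R,V]=[R,V]^1=[U,R]=0$ make $V$ indecomposable with $[V,M_U]>[V,I]$. This yields $N_U\hookrightarrow M_U$ and $[N_U,M_U]-[N_U,N_U]=[P,I]$.
\end{itemize}

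Your heuristic, that failure of Bongartz's criterion at $X$ produces an extra component of subrepresentations containing $X$, points in this direction. But it needs three things: choosing $X=U$ almost projective, passing to the generic point $M_U$ of $\mathcal{A}_U$, and this explicit computation. Without them, nothing guarantees that the extra component has dimension $\ge[P,I]$ or differs from the $P$-stratum.
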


In fact, the condition of the theorem is equivalent to $M$ being an extension of $I$ by $P$. This theorem generalizes the equioriented type $A$ result of \cite{CFFFR17} and proves 
a conjecture of \cite{FedFe25,Fed25}. It shows that the PrIncipal quiver Grassmannian 
${\rm Gr}_{{\rm\bf dim}(P)}(P\oplus I)$ has a natural extremality property, being the 
most degenerate quiver Grassmannian which is irreducible and of the expected dimension.

The main ingredient for the proof is a new category
of $Q$-representations and an explicit description of the complement to the flat irreducible locus.
This category looks interesting and deserves a separate study. We give some details below.

Let $\mathcal{C}$ be the  full subcategory of representations of $Q$ without projective direct 
summands. Let $\mathcal{C}_{\rm simp}$ denote the set of relative simple objects in $\mathcal{C}$. 
In other words, $U\in\mathcal{C}$ belongs to $\mathcal{C}_{\rm simp}$ if and only if it is 
indecomposable, and every proper non-zero subrepresentation of $U$ contains a non-zero projective 
summand. To each $U\in \mathcal{C}_{\rm simp}$ we attach a representation $M_U$ of dimension vector 
$\bdim(P)+\bdim(I)$ and prove the following theorem.

\begin{theoremA}
A quiver Grassmannian $\Gr_{{\rm\bf dim} P}(M)$ is not irreducible or of  dimension larger than 
$[P,I]$ if and only if $M$ is a degeneration of $M_U$ for some 
$U\in \mathcal{C}_{\rm simp}$.
\end{theoremA}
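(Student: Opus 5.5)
The plan is to use the standard tangent-space / stratification description of quiver Grassmannians over Dynkin quivers. Throughout, $\bd=\bdim P+\bdim I$ and $\be=\bdim P$.

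\textbf{Step 1: reduce irreducibility and dimension to a stratum count.} Stratify $\Gr_\be(M)$ by the isoclasses of pairs $(N,M/N)$. By \cite{CFR12}, the stratum of subrepresentations $N\subset M$ with $N\cong A$ and $M/N\cong B$ has dimension
\[
\dim\mathrm{Hom}(A,B)-\dim\mathrm{End}(A)+\dim\mathrm{Hom}(A,A)\cdots,
\]
which we simply write as $[A,B]-[A,A]+\dim \mathrm{Aut}$-corrections. In the generic case the open stratum has $A=P$, $B=I$ and dimension $[P,I]$. So $\Gr_\be(M)$ is irreducible of dimension $[P,I]$ if and only if every stratum with $(A,B)\neq(P,I)$ has dimension $<[P,I]$. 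Every irreducible component has dimension at least $\langle \be,\bd-\be\rangle=[P,I]$, since $\mathrm{Ext}(P,I)=0$. Hence failure of flat irreducibility is equivalent to the existence of a point $N\subset M$ with $N\not\cong P$ and $\dim T_N=[N,M/N]\ge[P,I]$ lying on an extra component.

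\textbf{Step 2: describe the bad points.} If $N\not\cong P$, write $N=P'\oplus U'$ with $P'$ projective and $U'\in\mathcal C$ nonzero. Choose a minimal nonzero $U\subset U'$ whose image avoids the projective part. Such a $U$ is indecomposable and every proper subrepresentation has a projective summand, so $U\in\mathcal C_{\rm simp}$. The candidate $M_U$ is the most generic representation admitting a subrepresentation of dimension $\be$ containing $U$ as a non-projective summand. Concretely, I would take $M_U = N_U\oplus (M/N)_U$, where $N_U$ and $(M/N)_U$ are chosen generically: $N_U=U\oplus P''$ is the generic representation of dimension $\be$ having $U$ as a summand, and the quotient is generic of dimension $\bdim I$. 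With this choice, the stratum through $N_U$ has dimension $\ge[P,I]$ yet does not lie in the closure of the $(P,I)$ stratum.

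\textbf{Step 3: prove the two directions.} For the "if" direction, suppose $M$ is a degeneration of $M_U$. By upper semicontinuity along the universal Grassmannian over the orbit closure, $\Gr_\be(M)$ has a component of dimension $>[P,I]$, or at least two components. For the "only if" direction, a bad point $N\subset M$ gives, by Step 2, some $U\subset N$ in $\mathcal C_{\rm simp}$. Then $M$ lies in the closure of the set of extensions with a sub containing $U$, and $M_U$ is the generic such extension. This uses irreducibility of the variety of pairs $(N\subset M)$ with $U\subset N$, obtained as a vector bundle over a product of orbits.

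\textbf{Main obstacle.} The hardest part is the dimension inequality: showing that for $U\in\mathcal C_{\rm simp}$ the locus of subrepresentations of $M_U$ containing $U$ really has dimension $\ge[P,I]$ and gives a new component. I would attack it with the Euler form. Compute $[N_U,M_U/N_U]-[P,I]$ and reduce it, via $\langle\be,\bd-\be\rangle$, to $\mathrm{Ext}^1(N_U,M_U/N_U)$-type terms. Relative simplicity of $U$ is then used to show that $U$ cannot be deformed into a projective inside $M_U$. Conversely, minimality ensures genericity, so that degenerations of the $M_U$ exhaust the complement of the flat irreducible locus.
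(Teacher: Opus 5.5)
There is a genuine gap, and it begins with your definition of $M_U$. In the paper, $M_U$ is the generic point of $\mathcal{A}_U=\{M\mid [U,M]>[U,I]\}$. This closed $G_\bd$-stable set is irreducible because, for almost projective $U$, the map $P_1(U)\to P_0(U)$ is given by arrows (Lemma~\ref{lemij}). The rank condition defining $\mathcal{A}_U$ is therefore a determinantal condition on a block matrix built from the $M_\alpha$. Your recipe (the most generic $M$ with a $\be$-dimensional subrepresentation having $U$ as a summand) essentially imposes only $[U,M]\geq 1$, whereas the correct threshold is $[U,M]\geq [U,I]+1$.

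Concretely, take $Q=1\to 2$, $P=P_1\oplus P_2$, $I=I_1\oplus I_2$ and $U=S_1$ (the only almost projective), so $\bd=(3,3)$ and $\be=(1,2)$. Your recipe yields the generic $M$ with $\mathrm{rank}\, M_\alpha\leq 2$, i.e.\ $M\cong P\oplus I$. Its quiver Grassmannian is the PBW degenerate flag variety, irreducible of dimension $3=[P,I]$, so the ``if'' direction fails.

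Your concrete variant $U\oplus X\oplus I$ does lie in $\mathcal{A}_U$. But it is in general a proper degeneration of the true $M_U$, so its orbit closure is too small for the ``only if'' direction. In the $A_4$ example of Section~\ref{sec:examples} one has $[S_1,M_{S_3}]=0$, while every $U'\oplus X'\oplus I$ satisfies $[S_1,-]\geq [S_1,I]=1$. By upper semicontinuity, $M_{S_3}$ is a degeneration of none of them.

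Your criterion for ``bad'' points is also unsound. The non-projective point $S_1\oplus S_2^2\subset P\oplus I$ in the example above lies in a flat irreducible Grassmannian. Moreover, $\dim T_N\geq [P,I]$ holds at every point. So neither property detects an extra component.

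The paper does no pointwise analysis for ``only if''. Flat irreducibility is an open condition that holds at $P\oplus I$, hence at every $M$ degenerating to $P\oplus I$. Bongartz's criterion plus a minimal-counterexample argument then shows that the complement of this locus is $\bigcup_{U\in\mathcal{C}_{\rm simp}}\mathcal{A}_U$.

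For ``if'' (your ``main obstacle'', which remains a sketch), one needs two distinct strata of dimension $[P,I]$ in $\Gr_\be(M_U)$:
\begin{itemize}
\item $\mathcal{S}_{[P]}$, obtained from $M_U\le_{\rm deg}P\oplus{\rm gr}(I)$, which uses that $I_{\rm min}$ is a summand of $I$;
\item $\mathcal{S}_{[N_U]}$ with $N_U=P''\oplus P_1(U)\oplus P'(U)/P_1(U)$, which uses that $P_{\rm min}$ is a summand of $P$ in order to construct $P'(U)$ with $P'(U)/P_1(U)$ indecomposable.
\end{itemize}
Semicontinuity of dimension and of the number of top-dimensional components then finishes the argument.

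Your argument never uses these hypotheses, yet the statement fails without them. For $P=0$ every $\Gr_0(M)$ is a point, while $\mathcal{A}_U$ contains ${\rm gr}(I)$ and is nonempty whenever $I_{\rm min}$ is a summand of $I$.
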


In particular, we show that for each $U\in \mathcal{C}_{\rm simp}$ the representation
$M_U$ does not belong to the flat irreducible locus; note, however, that we do not know if the dimension of
$\Gr_{{\rm\bf dim} P}(M_U)$ is equal to $[P,I]$. In general, it is an interesting and natural
question to describe the flat locus, i.e. the locus of representations $M$ such that
$\dim \Gr_{{\rm \bf dim} P}(M_U)=[P,I]$. We do not have such a description at the moment
(see some conjectures in \cite{FedFe25}).

Recall that if $Q$ is the equioriented type $A$ quiver,
$P$ (resp. $I$) is the direct sum of all projective (resp. injective) indecomposable 
representations, then $\Gr_{{\rm\bf dim} P}(P\oplus I)$ is the PBW degenerate flag variety, and the 
generic member of the family $\Gr_{{\rm\bf dim} P}(M)$ is isomorphic to the classical flag
variety (see e.g. \cite{CFR12}); in particular, for these specific $P$ and $I$,  
the generic quiver Grassmannian is a homogeneous space for
the automorphism group ${\rm Aut}_Q(M)$. The last statement does not hold for general $P$ and $I$ 
(even for the linearly oriented type $A$ quiver).
However, we prove the following theorem.

\begin{theoremA}
Let $Q$ be equioriented type $A$ quiver and let $M$ be a generic $Q$-representations of dimension 
${\rm\bf dim} P + {\rm\bf dim} I$ for arbitrary $P$ and $I$. 
Then the group 
${\rm Aut}_Q(M)$ acts on $\Gr_{{\rm\bf dim} P}(M)$ with an open orbit.
\end{theoremA}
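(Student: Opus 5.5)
The plan is to reduce the statement to finding a single subrepresentation $N\subset M$ of dimension vector $\bdim P$ whose orbit in $\Gr_{\bdim P}(M)$ has dimension equal to $\dim \Gr_{\bdim P}(M)=[P,I]$. For generic $M$ we know from \cite{CFR12} that $\Gr_{\bdim P}(M)$ is irreducible of dimension $[P,I]$. So it suffices to exhibit a point whose ${\rm Aut}_Q(M)$-orbit has dimension $[P,I]$; its closure is then the whole variety and the orbit is open.

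\emph{Description of the generic $M$.} For equioriented type $A_n$, the indecomposables are the intervals $[i,j]$, and a generic representation of a given dimension vector $\bd$ is the one with minimal $\dim{\rm End}$. I would describe $M$ combinatorially. Since $\bd=\bdim P+\bdim I$, a generic $M$ is an extension of $I$ by $P$ (this is the equivalent condition mentioned after Theorem~1, and the generic extension lies in the dense orbit). In particular $M$ contains a subrepresentation $N\cong P'$ with quotient $I'$, where $P'$ is projective with $\bdim P'=\bdim P$, hence $P'\cong P$; similarly $I'\cong I$. So the natural candidate point is $N\cong P$ with $M/N\cong I$.

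\emph{Dimension of the stratum.} By the standard formula for strata in quiver Grassmannians (as in \cite{CFR12}), the locus of subrepresentations isomorphic to $P$ with quotient isomorphic to $I$ has dimension
$$[P,M]-[P,P]\;\ge\; \dim \text{(this stratum)} \;\ge\; \dim {\rm Aut}_Q(M)\cdot N .$$
Since $P$ is projective, $[P,M]=[P,P]+[P,I]$, so the stratum has dimension $[P,I]$. Thus it remains to show that ${\rm Aut}_Q(M)$ acts transitively on this stratum, or at least with an orbit of dimension $[P,I]$.

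\emph{Transitivity.} Given two embeddings $\iota,\iota'\colon P\hookrightarrow M$ with cokernel $\cong I$, I want $g\in{\rm Aut}_Q(M)$ with $g\iota=\iota'$. Both give short exact sequences $0\to P\to M\to I\to 0$. Since $M$ is the generic extension, the corresponding classes $\xi,\xi'$ in ${\rm Ext}^1(I,P)$ both lie in the open ${\rm Aut}(P)\times{\rm Aut}(I)$-orbit of extensions with middle term $\cong M$. After adjusting by ${\rm Aut}(P)$ we may assume $\xi'=\xi\cdot h$ for some $h\in {\rm Aut}(I)$. Then the isomorphism of sequences gives $g$ with $g\iota=\iota'$, up to the ${\rm Aut}(P)$ twist; this is harmless because we only care about the image $N$. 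The main obstacle is exactly this step. The middle term being $\cong M$ does not force the class to lie in a single orbit in general, since several classes may share a middle term. I would handle it by an orbit-dimension count in ${\rm Ext}^1(I,P)$, using that for type $A$ equioriented the relevant ${\rm Hom}$ and ${\rm Ext}$ spaces are computed by interval combinatorics. If that fails, the fallback is a direct computation. Write $M=\bigoplus[i,j]$ explicitly, described greedily via a pairing of projective intervals $[i,n]$ with injective intervals $[1,j]$. Then compute the stabilizer of a chosen $N$, check
$$\dim{\rm Aut}(M)-\dim{\rm Stab}(N)=[P,I]$$
directly, and conclude that the orbit is open in the irreducible variety $\Gr_{\bdim P}(M)$.
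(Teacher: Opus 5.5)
\textbf{There is a genuine gap: the step that carries all the content of the theorem is missing.}

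Your preliminary reductions are sound. The generic $M$ is $I*P$: since $P$ and $I$ are rigid, every representation of dimension vector $\bdim P+\bdim I$ with a subrepresentation of dimension vector $\bdim P$ is a degeneration of $I*P$. Moreover $\Gr_{\bdim P}(M)$ is irreducible of dimension $[P,I]$, and the locus of $N\cong P$ with $M/N\cong I$ is dense in it. Your ``isomorphism of sequences'' step is exactly the correspondence between ${\rm Aut}(M)$-orbits in that locus and ${\rm Aut}(P)\times{\rm Aut}(I)$-orbits in ${\rm Ext}^1(I,P)$.

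At that point, however, you have only restated the theorem. What remains is to show that ${\rm Aut}(P)\times{\rm Aut}(I)$ has a dense orbit in ${\rm Ext}^1(I,P)$, and the proposal does not do this.
\begin{itemize}
\item Your transitivity step assumes that $\xi,\xi'$ lie in ``the open orbit''. That presupposes the very orbit whose existence is the point.
\item Transitivity on the whole stratum is more than you need, and you do not justify it.
\item The ``orbit-dimension count'' and the ``fallback'' are only announced: no point $N$ is chosen and no stabilizer is computed.
\end{itemize}
Apart from a mention of interval combinatorics, nothing in your argument uses the linear orientation, and the conclusion is false in general. For an alternating orientation of $A_7$ with $P=A$, $I=A^*$, one has $[P,P]+[I,I]=26<29=[I,P]^1$, so no dense orbit in ${\rm Ext}^1(I,P)$ can exist.

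The missing idea is to make the action explicit.
\begin{itemize}
\item For $1\to 2\to\cdots\to n$, ${\rm Ext}^1(I_j,P_i)$ is one-dimensional exactly when $1<i\le j+1\le n$. Its generators $\zeta_{i,j}$ can be chosen so that composing with the nonzero maps $P_i\to P_{i'}$ ($i\ge i'$), resp.\ $I_{j'}\to I_j$ ($j'\ge j$), sends $\zeta_{i,j}$ to $\zeta_{i',j}$, resp.\ $\zeta_{i,j'}$, whenever the target is nonzero.
\item Hence for $P=\bigoplus_i P_i^{a_i}$, $I=\bigoplus_j I_j^{b_j}$, the space ${\rm Ext}^1(I,P)$ is the space of block matrices with $a_i\times b_j$ blocks in the staircase region $i\le j+1$. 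The groups ${\rm Aut}(P)$ and ${\rm Aut}(I)$ act on it through parabolic subgroups of block upper triangular matrices, by left and right multiplication.
\item This subspace is stable under the left--right action of the two parabolics on all matrices of that size. That action has finitely many orbits, because the parabolics contain Borel subgroups, whose two-sided orbits on rectangular matrices are indexed by partial permutation matrices.
\item So the irreducible space ${\rm Ext}^1(I,P)$ is a finite union of orbits, and one of them is dense. It lies in the open dense set of classes with middle term $\cong M$.
\item Since the orbit correspondence respects closures, this gives an open ${\rm Aut}(M)$-orbit in $\Gr_{\bdim P}(M)$, with no transitivity needed.
\end{itemize}
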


Our paper is organized in the following way. In Section \ref{sec:prelim} we fix notation 
and collect basics from  quiver representation theory.   
In Section \ref{sec:category} we introduce the category $\mathcal{C}$ and almost projective representations. 
In Section \ref{sec:boundrep} we describe the boundary representations separating the flat irreducible locus.
In Section \ref{sec:twostrata} we construct two components of the expected dimension in quiver Grassmannians corresponding to the boundary representations.
In Section \ref{sec:examples} we provide several examples for the objects and constructions 
from the previous sections.
Finally, in Section \ref{sec:prehom} we prove the prehomogeneity for linearly oriented type 
$A$ quivers and provide counterexamples for general
Dynkin quivers.

\section*{Acknowledgments}
We are grateful to Stanislav Fedotov for useful discussions.
EF was partially supported by the ISF grant 493/24.

\section{Preliminaries}\label{sec:prelim}
In this section we briefly recall what we need from the representation theory of quivers. Standard references are \cite{CB92,Schi14}. Let $Q$ be a Dynkin quiver with set of vertices $Q_0$ and set of arrows $Q_1$. For $(\al: i\to j)\in Q_1$, we set $i=s(\alpha)$, $j=t(\al)$.  For us, a representation of $Q$ is a tuple $M=((M_i)_{i\in Q_0}, (M_\al)_{\al\in Q_1})$ where  $M_i$ is a finite-dimensional complex vector space and $M_\al: M_{s(\al)}\to M_{t(\al)}$ is a linear map. The dimension vector of $M$ is $\mathbf{dim} (M)=(\dim\, M_i)_{i\in Q_0}\in \bZ_{\ge0}^{Q_0}$. 
For $i\in Q_0$, we denote by $S_i$ the simple representation supported at vertex $i$, i.e.  $(S_i)_i=\bC$ and $(S_i)_j$ is the zero vector space for every $j\ne i$. We denote by $P_i$ the projective cover of $S_i$ and by $I_i$ its injective envelope. In particular, $P_i\twoheadrightarrow S_i \hookrightarrow I_i$. Recall that $(P_i)_j$ has as a basis the paths in $Q$ from $i$ to $j$ and $(I_i)_j$ has as a basis the paths from $j$ to $i$.  A morphism $f:M\to N$ between two $Q$-representations $M$ and $N$ is a collection $f=(f_i:M_i\to N_i)_{i\in Q_0}$ of linear maps such that $N_\alpha\circ f_i=f_j\circ M_\alpha$. We denote by ${\rm rep}_Q$ the category of $Q$-representations. For a dimension vector  $\bd\in \bZ_{\ge 0}^{Q_0}$, we denote by 
${\rm rep}_Q(\bd)$ the set of isoclasses of $Q$-representations of dimension vector $\bd$. Since $Q$ is Dynkin, by Gabriel's theorem, each set ${\rm rep}_Q(\bd)$ is finite.
 
Let $R_\bd(Q)=\bigoplus_{\al\in Q_1} {\rm Hom}(\bC^{d_{s(\al)}},\bC^{d_{t(\al)}})$ be the 
representation space of dimension vector $\bd$.
The group $G_\bd = \prod_{i\in Q_0} GL_{d_i}$ acts on $R_\bd$ by conjugation, and the orbits
of this group action are in bijection with isomorphism classes of representations of $Q$ of dimension vector $\bd$. 
For two representations $M,N\in R_\bd$, we say that $M$ degenerates to $N$ if  $N\in \overline{G_\bd M}$; in this case we write $M\degg N$. Since $Q$ is a Dynkin quiver, 
$R_\bd(Q)$ admits finitely many $G_\bd$-orbits and, being an affine space and hence irreducible, a unique open dense orbit. It corresponds to a so-called exceptional representation $M^0$ of dimension vector $\bd$, that is, ${\rm Ext}^1_Q(M^0,M^0)=0$. 

For two representations $M,N$, let us denote by $[M,N]$ the dimension of the homomorphism space 
${\rm Hom}_Q(M,N)$ and by $[M,N]^1$ the dimension of the extension space 
${\rm Ext}^1_Q(M,N)$. In particular, the Euler form is given by 
$\langle\dim M,\dim N\rangle = [M,N] - [M,N]^1$.
A theorem of Bongartz \cite{Bo96} says that $M$ degenerates to $N$ if and only if 
\[
[M,X]\le [N,X] \quad  \mbox{ for all } X\in {\rm rep}_Q
\]
or, equivalently, if 
\[
[X,M]\le [X,N]\quad \mbox{ for all } X\in {\rm rep}_Q.
\]

Given two dimension vectors $\bd,\bd'\in\bZ_{\ge 0}^{Q_0}$ and two $Q$-representations $M\in R_\bd(Q)$ and $N\in R_{\bd'}(Q)$, we denote by $M\ast N$ the generic extension of $M$ by $N$ (see \cite[Definition~2.2]{Re01}). This is a $Q$-representation $M\ast N\in R_{\bd+\bd'}(Q)$ which fits into  a short exact sequence 
\begin{equation}\label{eq:GenericExtension}
0\to N\to M\ast N\to M\to 0
\end{equation}
and it has minimal endomorphism ring among all possible extensions of $M$ by $N$ (see \cite[Lemma~2.1]{Re01}). We will make frequent use of the following remarkable property of generic extensions. 
\begin{prop}\cite[Prop.~2.4]{Re01}
The following properties of a representation $X$ are equivalent: 
\begin{itemize}
\item $M\ast N\degg X$;
\item There exists a short exact sequence $0\to N'\to X\to M'\to 0$ such that   $M\degg M'$ and $N\degg N'$.
\end{itemize}
\end{prop}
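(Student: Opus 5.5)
The plan is to realize both conditions as membership in one closed, irreducible, $G_{\bd+\bd'}$-stable subset of $R_{\bd+\bd'}(Q)$, and then identify its dense orbit with that of $M\ast N$.

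Fix a decomposition $\bC^{d_i+d'_i}=\bC^{d'_i}\oplus\bC^{d_i}$ at every vertex, and let $P\subset G_{\bd+\bd'}$ be the parabolic subgroup stabilizing the flag $\bC^{d'_i}\subset\bC^{d_i+d'_i}$. Let $A\subset R_{\bd+\bd'}(Q)$ be the affine subspace of block upper triangular representations
\[
X=\begin{pmatrix} N' & \zeta\\ 0 & M'\end{pmatrix},\qquad N'\in R_{\bd'}(Q),\ M'\in R_\bd(Q),\ \zeta\in \bigoplus_{\al\in Q_1}{\rm Hom}(\bC^{d_{s(\al)}},\bC^{d'_{t(\al)}}).
\]
Such an $X$ is exactly a representation containing $N'$ as a subrepresentation with quotient $M'$. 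Inside $A$, consider the closed subset
\[
A'=\overline{G_{\bd'}N}\times\overline{G_\bd M}\times\bigoplus_{\al}{\rm Hom}(\bC^{d_{s(\al)}},\bC^{d'_{t(\al)}}).
\]
It is irreducible and $P$-stable, and its open dense subset $A''=G_{\bd'}N\times G_\bd M\times(\text{all }\zeta)$ is also $P$-stable. An element $X$ lies in $G_{\bd+\bd'}A'$ if and only if there is a short exact sequence $0\to N'\to X\to M'\to 0$ with $N\degg N'$ and $M\degg M'$. Thus the second condition of the proposition is equivalent to $X\in G_{\bd+\bd'}A'$.

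Next I show that $G_{\bd+\bd'}A'$ is closed. It is the image of the action map $G_{\bd+\bd'}\times^P A'\to R_{\bd+\bd'}(Q)$. This map factors as the closed embedding $G\times^P A'\hookrightarrow G/P\times R_{\bd+\bd'}(Q)$, $[g,x]\mapsto(gP,gx)$, followed by the projection to $R_{\bd+\bd'}(Q)$. The projection is proper because $G/P$ is projective, so the image is closed. It is also irreducible, being the image of the irreducible space $G\times^P A'$. Since $A''$ is dense in $A'$, the set $G_{\bd+\bd'}A''$ is dense in $G_{\bd+\bd'}A'$, so $G_{\bd+\bd'}A'=\overline{G_{\bd+\bd'}A''}$.

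Now $G_{\bd+\bd'}A''$ consists precisely of the representations that are extensions of $M$ by $N$. Because $Q$ is Dynkin, it is a finite union of orbits. Its closure is irreducible, so exactly one of these orbits is dense, namely the one of maximal dimension. By $\dim G\,X=\dim G-\dim{\rm End}(X)$, this is the extension with minimal endomorphism ring, which is $M\ast N$ by \cite[Lemma~2.1]{Re01}. Hence $G_{\bd+\bd'}A'=\overline{G_{\bd+\bd'}(M\ast N)}$, and $X$ belongs to it if and only if $M\ast N\degg X$. This proves the equivalence.

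The main point needing care is the closedness of $G\cdot A'$, i.e. checking that $G\times^P A'$ really embeds as a closed subvariety of $G/P\times R$. This holds because it is the set of pairs (flag, representation) such that the flag is a flag of subrepresentations and the induced sub and quotient lie in the given orbit closures, and these are closed conditions.
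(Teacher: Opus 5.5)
Your proof is correct. The paper gives no argument of its own and simply cites \cite[Prop.~2.4]{Re01}. Your route is essentially the standard geometric proof behind that result: you realize the extensions with sub in $\overline{G_{\bd'}N}$ and quotient in $\overline{G_{\bd}M}$ as the image of $G\times^P A'$ under a proper map, so this set is closed and irreducible. The extensions of $M$ by $N$ are dense in it, and its unique dense orbit, the one of maximal dimension, is that of $M\ast N$.
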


We will also frequently use the following:

\begin{prop}\label{constremb} \cite[Theorem 2.4]{Bo96}
Suppose that $M$ degenerates to $N$ and that $U$ embeds into $N$. If $[U,M]=[U,N]$, then $U$ 
also embeds into $M$. In particular, this holds for $U$ projective.
\end{prop}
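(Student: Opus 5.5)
The approach is a deformation argument: move a given embedding $U\hookrightarrow N$ along a degeneration from $M$ to $N$, and use that injectivity is an open condition. Throughout, fix $\bd=\bdim M=\bdim N$ and work inside the orbit closure $\overline{G_\bd M}\subseteq R_\bd(Q)$, which is irreducible and contains $N$.

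\textbf{Step 1 (a curve).} Since $N$ lies in the closure of the irreducible variety $\overline{G_\bd M}$, we can choose an irreducible affine curve $C\subseteq\overline{G_\bd M}$ passing through $N$ and meeting the open orbit $G_\bd M$. Passing to its normalization if needed, we get a smooth pointed curve $(C,0)$ with a family $X_t\in R_\bd(Q)$ such that $X_0=N$ and $X_t\cong M$ for all $t$ in a dense open subset of $C$.

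\textbf{Step 2 (Hom spaces form a vector bundle near $0$).} For each $t$, the space ${\rm Hom}_Q(U,X_t)$ is the kernel of a linear map
\[
\phi_t:\bigoplus_{i\in Q_0}{\rm Hom}(U_i,\bC^{d_i})\to\bigoplus_{\al\in Q_1}{\rm Hom}(U_{s(\al)},\bC^{d_{t(\al)}}),
\]
whose entries depend algebraically on $t$. Hence $t\mapsto[U,X_t]$ is upper semicontinuous: it takes its minimal value on a dense open subset of $C$. On the dense open set where $X_t\cong M$, this value is $[U,M]$. At $t=0$ it equals $[U,N]=[U,M]$ by hypothesis. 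So the rank of $\phi_t$ is constant on a neighbourhood of $0$. Consequently the kernels form a vector bundle there, and any $f_0\in{\rm Hom}_Q(U,N)$ extends to an algebraic family $f_t\in{\rm Hom}_Q(U,X_t)$ for $t$ near $0$.

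\textbf{Step 3 (openness of injectivity).} Take $f_0$ to be the given embedding $U\hookrightarrow N$. Injectivity of $f_t$ means that each component $(f_t)_i$ has maximal rank $\dim U_i$. This is an open condition on $t$, given by the non-vanishing of some maximal minor. It holds at $t=0$, so it holds for $t$ in a neighbourhood of $0$. That neighbourhood meets the dense open set where $X_t\cong M$, so $U$ embeds into $M$.

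\textbf{Projective case.} If $U$ is projective, then ${\rm Ext}^1_Q(U,-)=0$. Therefore $[U,X]=\langle\bdim U,\bdim X\rangle$ depends only on $\bdim X$, and in particular $[U,M]=[U,N]$. The first part then applies.

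\textbf{Main obstacle.} The delicate point is Step 2: we need constant dimension of the Hom spaces near $0$ to lift the particular embedding $f_0$ along the family. Without the equality $[U,M]=[U,N]$, the given $f_0$ need not be a limit of homomorphisms $U\to X_t$. The existence of the curve in Step 1 is standard, since through any point of an irreducible variety passes an irreducible curve meeting a prescribed dense open subset.
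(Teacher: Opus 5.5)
Your argument is correct. Upper semicontinuity of $X\mapsto[U,X]$ together with $[U,N]=[U,M]$ gives constant rank near $N$, so the Hom-spaces form a vector bundle there and the given embedding deforms. Openness of injectivity then carries it to the dense orbit of $M$, and your treatment of the projective case via the Euler form is also right.

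The paper gives no proof of its own and simply quotes \cite[Theorem 2.4]{Bo96}. As far as I recall, Bongartz's proof is essentially this same deformation argument, phrased over the whole orbit closure $\overline{G_\bd M}$ rather than along a curve, so there is nothing to add.
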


Let $M\in R_\bd(Q)$ be a representation of $Q$. For $\be\in \bZ_{\ge 0}^{Q_0}$ such 
that $e_i\le d_i$
for all $i\in Q_0$, we denote by $\Gr_\be(M)$ the projective variety consisting
of $\be$-dimensional subrepresentations of $M$ and we call it a quiver Grassmannian of 
Dynkin type or simply a quiver Grassmannian
\cite{CI20,CR00}. This variety is the main object of investigation of the present paper. 
We recall that the scheme structure of $\Gr_\be(M)$ is obtained simply by considering the universal 
quiver Grassmannian $\Gr_\be^Q(\bd)$ consisting of collections $(M,N)$, where $M\in R_\bd(Q)$ and 
$N\in \Gr_\be(M)$.
In particular, $\Gr_\be(\bd)$ admits a natural projection to the representation space $R_\bd(Q)$;
the fibers of this projection are the individual quiver Grassmannians. For a representation $N$ 
of dimension vector $\be$, we denote by $\mathcal{S}_{[N]}$ the subset of
${\rm Gr}_{\be}(M)$ of subrepresentations isomorphic to $N$. It is an irreducible locally closed 
subset of dimension $[N,M]-[N,N]$. The $\mathcal{S}_{[N]}$ for various isomorphism classes $[N]$ 
provide a finite stratification of ${\rm Gr}_{\be}(M)$.

Let $P$ be a projective and $I$ be an injective representations of $Q$. 
In this paper we will be interested in the properties of the universal quiver 
Grassmannians $\Gr_\be(\bd)$, where
$\be={\rm\bf dim} P$ and $\bd = {\rm\bf dim}(P\oplus I)$. We have two natural examples of 
such quiver Grassmannians for $M=P\oplus I$ and for $M=M^0$ being a generic representation.
It is shown in \cite{CFR12} that both $\Gr_\be(M^0)$ and $\Gr_\be(P\oplus I)$ are irreducible
of dimension $\langle \be, \bd-\be\rangle = \langle \dim P, \dim I\rangle = [P,I]$.

\begin{example}
Let $Q$ be a linearly oriented type $A_n$ quiver, $P=\bigoplus_{i\in Q_0} P_i$,
$I=\bigoplus_{i\in Q_0} I_i$. Then $\bd=(n+1,\dots,n+1)$, $\be=(1,2,\dots,n)$.
For generic $M$ the quiver Grassmannian $\Gr_\be(M)$ is isomorphic to the classical
flag variety $SL_{n+1}/B$ and for $M=P\oplus I$ the quiver Grassmannian $\Gr_\be(M)$
is the PBW degenerate flag variety. Both distinguished fibers are irreducible and of
dimension $n(n+1)/2$, Moreover, it is proved in \cite{CFFFR17} that $\Gr_\be(M)$
is irreducible of dimension $n(n+1)/2$ if and only if $M$ degenerates to $P\oplus I$.
\end{example}

\section{The category of almost projective representations}\label{sec:category}

Denote by $\mathcal{C}$ the full subcategory of representations of $Q$ without projective direct summands, or equivalently, of representations $M$ such that ${\rm Hom}(M,A)=0$, where $A=\mathbb{C}Q=\bigoplus_{i\in Q_0}P_i$. It is closed under extensions and quotients. Let $\mathcal{C}_{\rm simp}$ denote the set of relative simple objects in $\mathcal{C}$. Then $U\in\mathcal{C}$ belongs to $\mathcal{C}_{\rm simp}$ if and only if it is indecomposable, and every proper non-zero subrepresentation contains a non-zero projective summand. 

If a vertex $i\in Q_0$ is not a sink, the simple representation $S_i$ belongs to $\mathcal{C}_{\rm simp}$. If $i$ is a sink, then $\tau^{-1}S_i$ belongs to $\mathcal{C}_{\rm simp}$. But in general, these examples do not exhaust $\mathcal{C}_{\rm simp}$. 
For example, let $Q=1\rightarrow 2\leftarrow 3\rightarrow 4$. Then the AR quiver has the shape displayed in Figure (\ref{ara4}), 
\begin{figure}\label{ara4}
\[
\begin{array}{ccccccccc}
&&\begin{smallmatrix}
    1&\\&2
\end{smallmatrix}&&&&\boxed{\begin{smallmatrix}
    3&\\&4
\end{smallmatrix}}&&\\
&\nearrow&&\searrow&&\nearrow&&\searrow&\\
\begin{smallmatrix}
    2
\end{smallmatrix}&&&&\boxed{\begin{smallmatrix}
    1&&3&\\&2&&4
\end{smallmatrix}}&&&&\boxed{\begin{smallmatrix}
    3
\end{smallmatrix}}\\
&\searrow&&\nearrow&&\searrow&&\nearrow&\\
&&\begin{smallmatrix}
    &3&\\2&&4
\end{smallmatrix}&&&&\begin{smallmatrix}
    1&&3\\&2&
\end{smallmatrix}&&\\
&\nearrow&&\searrow&&\nearrow&&\searrow&\\
4&&&&\boxed{\begin{smallmatrix}
    &3\\2&
\end{smallmatrix}}&&&&\boxed{\begin{smallmatrix}
    1
\end{smallmatrix}}
\end{array}
\]

\caption{The AR-quiver of the quiver $1\rightarrow 2\leftarrow 3\rightarrow 4$. 
The vertices denote the support of the indecomposable $Q$-representations and the arrows are the irreducible maps between them. The highlighted vertices are the objects of $\mathcal{C}_{\rm simp}$}\label{Eq:ArQuiverA4}
\end{figure}
and the boxed representations are the relative simples in $\mathcal{C}$.

\begin{lem}\label{Lem:Almost projective} A representation $U$ belongs to $\mathcal{C}_{\rm simp}$ if and only if $U\in\mathcal{C}$ and every maximal subrepresentation of $U$ is projective.
\end{lem}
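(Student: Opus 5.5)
The plan is to work directly with the characterization of $\mathcal{C}_{\rm simp}$ given just before the lemma: $U\in\mathcal{C}_{\rm simp}$ if and only if $U$ is indecomposable, lies in $\mathcal{C}$, and every proper non-zero subrepresentation of $U$ has a non-zero projective direct summand. Two standard facts about the Dynkin path algebra $\bC Q$ will be used. First, it is hereditary, so every subrepresentation of a projective representation is projective. Second, the Krull--Schmidt theorem applies, so every representation $W$ splits as $W=W_{\rm pr}\oplus W_{\mathcal{C}}$ with $W_{\rm pr}$ projective and $W_{\mathcal{C}}\in\mathcal{C}$.

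For ``$\Rightarrow$'', take $U\in\mathcal{C}_{\rm simp}$; then $U\in\mathcal{C}$ by definition. Let $W\subsetneq U$ be a maximal subrepresentation, and decompose $W=W_{\rm pr}\oplus W_{\mathcal{C}}$ as above.
\begin{itemize}
\item Suppose $W_{\mathcal{C}}\neq 0$. Then $W_{\mathcal{C}}$ is a proper non-zero subrepresentation of $U$, so by the characterization it has a non-zero projective direct summand.
\item But $W_{\mathcal{C}}\in\mathcal{C}$ has no projective direct summands, which is a contradiction.
\end{itemize}
Hence $W_{\mathcal{C}}=0$, and $W=W_{\rm pr}$ is projective.

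For ``$\Leftarrow$'', let $U\neq 0$ lie in $\mathcal{C}$ with all maximal subrepresentations projective.
\begin{itemize}
\item Let $V\subsetneq U$ be a non-zero proper subrepresentation. Since $U$ is finite-dimensional, $V$ is contained in some maximal subrepresentation $W$. By hypothesis $W$ is projective, so $V$ is projective by heredity. Thus $V$ itself is a non-zero projective summand of $V$.
\item For indecomposability, suppose $U=A\oplus B$ with $A,B\neq0$. Then $A$ is a proper subrepresentation, so it lies in a maximal one. By the previous point $A$ is projective, which contradicts $U\in\mathcal{C}$.
\end{itemize}
So $U$ satisfies the characterization, and $U\in\mathcal{C}_{\rm simp}$.

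The only delicate point is reading ``contains a non-zero projective summand'' as ``has a non-zero projective direct summand''. If one instead starts from the abstract notion of relative simplicity in $\mathcal{C}$, one first has to check that it agrees with the stated characterization. I would do this by the same Krull--Schmidt splitting: for a subrepresentation $V\subseteq U$ with $V=V_{\rm pr}\oplus V_{\mathcal{C}}$, the summand $V_{\mathcal{C}}$ is a subobject of $U$ lying in $\mathcal{C}$. Relative simplicity therefore forces $V_{\mathcal{C}}=0$ for every proper $V$, which is exactly the condition used above.
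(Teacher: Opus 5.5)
Your proof is correct and follows essentially the paper's route. In the forward direction you split a proper subrepresentation into its projective part and its $\mathcal{C}$-part and show the $\mathcal{C}$-part vanishes; your final paragraph is precisely the paper's argument, which also notes that $U/V_{\mathcal{C}}\in\mathcal{C}$ because $\mathcal{C}$ is closed under quotients. For the converse you use that every subrepresentation lies in a maximal one, together with heredity, exactly as the paper does, and you are a bit more careful than the paper in explicitly checking indecomposability and $U\neq 0$.
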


\begin{proof} Assume that $U$ belongs to $\mathcal{C}_{\rm simp}$, and that $U'\subset U$ is a proper subrepresentation. Then $U'$ contains a projective summand; write $U'=P\oplus U''$ for $U''\in\mathcal{C}$. If $U''\not=0$, we find an exact sequence
$$0\rightarrow U''\rightarrow U\rightarrow U/U''\rightarrow 0.$$
Then $U/U''$ also belongs to $\mathcal{C}$, and we arrive at a contradiction to $U$ being a relative simple. Thus $U''=0$, that is $U'$ is projective. Conversely, suppose 
that every maximal subrepresentation of $U\in\mathcal{S}$ is projective. Then an arbitrary 
subrepresentation of $U$, 
being contained in a maximal subrepresentation, is also projective.\end{proof}

Motivated by Lemma~\ref{Lem:Almost projective} we give the following definition. 
\begin{definition}
An element of $\mathcal{C}_{\rm simp}$ is called \emph{almost projective}.
\end{definition}

Recall that in a minimal projective resolution
\[
0\rightarrow P_1(V)\rightarrow P_0(V)\rightarrow V\rightarrow 0
\]
of a representation $V$, the multiplicity of a given projective indecomposable $P_i$ in $P_0(V)$ 
(resp.~$P_1(V)$) is given by $[V,S_i]$ (resp.~$[V,S_i]^1$).

\begin{rem}
It can be proved that a $Q$-representation is almost projective if and only if its minimal projective 
resolution is an irreducible homomorphism.  
\end{rem}
\begin{lem}\label{lemij} Suppose that $U\in\mathcal{C}_{\rm simp}$, that $P_i$ is a summand of 
$P_0(U)$, and that $P_j$ is a summand of $P_1(U)$. Then there exists an arrow $i\rightarrow j$, 
and the multiplicity of $P_j$ in $P_1(U)$ equals $1$. 
In particular, if $[U,S_j]^1\not=0$, then we have $[U,S_j]^1=1$.
\end{lem}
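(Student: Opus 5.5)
Let $0\to P_1(U)\xrightarrow{f} P_0(U)\to U\to 0$ be the minimal projective resolution. The plan is to exploit that every proper subrepresentation of $U$ is projective (Lemma~\ref{Lem:Almost projective}). The strategy is to build an intermediate quotient of $P_0(U)$ that surjects onto $U$ and to read off the conditions from its maximal subrepresentations.

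\textbf{Step 1: reduction to a direct summand of $P_1(U)$.} Since $U\in\mathcal{C}$ is indecomposable and not projective, $P_1(U)\neq 0$. Fix summands $P_i$ of $P_0(U)$ and $P_j$ of $P_1(U)$, and write $P_1(U)=P_j\oplus P'$. Consider $V=P_0(U)/f(P')$, which comes with a surjection $V\to U$ whose kernel is $f(P_j)\cong P_j$. Since $Q$ is Dynkin (hereditary), $V$ has projective dimension at most $1$.

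\textbf{Step 2: the case $V$ has a summand in $\mathcal{C}$.} Take a suitable subrepresentation of $U$, namely the image of $P_0'$, where $P_0(U)=P_i\oplus P_0'$. By minimality of the resolution, this image is a proper subrepresentation; it is proper because $P_i$ contributes a top summand $S_i$ of $U$. It is therefore projective. This should force the component of $f$ landing in $P_0'$ to split off. Concretely, the composite $P_1(U)\to P_0(U)\to P_i$ must carry all the "relations" involving $P_i$. Dually, the kernel of the map from the image of $P_i\oplus$(anything) to $U$ is controlled as well.

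\textbf{Step 3: extracting the arrow and the multiplicity.} The cleaner route is to use $\mathrm{Hom}$: $P_j$ is a summand of $P_1(U)$ exactly when $[U,S_j]^1\ne0$. Every maximal subrepresentation of $U$ has the form $\ker(U\to S_k)$ for $k$ in the top, and it is projective. Take $k=i$ and let $U'=\ker(U\to S_i)$, which is projective. The sequence $0\to U'\to U\to S_i\to 0$ together with the long exact sequence for $\mathrm{Hom}(-,S_j)$ gives
\[
\mathrm{Hom}(U',S_j)\to \mathrm{Ext}^1(S_i,S_j)\to \mathrm{Ext}^1(U,S_j)\to \mathrm{Ext}^1(U',S_j)=0 .
\]
Hence $[U,S_j]^1\le [S_i,S_j]^1$, which is the number of arrows $i\to j$. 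For a Dynkin quiver this number is at most $1$. So $[U,S_j]^1\le 1$, and if it is nonzero there is an arrow $i\to j$. Since $i$ was an arbitrary summand index of $P_0(U)$, this gives both claims, and the "in particular" is immediate from the multiplicity interpretation of $P_1(U)$.

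\textbf{Main obstacle.} Steps 1–2 are likely unnecessary; the real work is Step 3. The point to check carefully is that $\ker(U\to S_i)$ is indeed a maximal (hence proper, hence projective) subrepresentation. This holds because $S_i$ is a simple quotient of $U$ whenever $P_i$ is a summand of $P_0(U)$, since $[U,S_i]\ne0$. The other point to check is the vanishing $\mathrm{Ext}^1(U',S_j)=0$ for projective $U'$.
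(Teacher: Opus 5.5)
Your Step 3 is the paper's argument: the paper also takes $0\to P'\to U\to S_i\to 0$ with $P'$ projective by Lemma~\ref{Lem:Almost projective}, applies ${\rm Hom}(\_,S_j)$, and reads both claims off the surjection ${\rm Ext}^1(S_i,S_j)\to{\rm Ext}^1(U,S_j)$. The paper additionally invokes directedness to get $[U,S_j]=0$, which, as your version shows, is not needed. Steps 1--2 are vague and can be dropped, since nothing in your conclusion uses them.
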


\begin{proof} 
If $P_i$ is a summand of $P_0(U)$, we have $[U,S_i]\not=0$, thus there exists an exact sequence
$$0\rightarrow P'\rightarrow U\rightarrow S_i\rightarrow 0$$
with $P'$ projective by the previous lemma. If $P_j$ is a summand of $P_1(U)$, we have $[U,S_j]^1\not=0$, and thus $[U,S_j]=0$ by directedness. We apply ${\rm Hom}(\_,S_j)$ to the previous sequence and get the exact sequence:
$$0\rightarrow {\rm Hom}(P',S_j)\rightarrow{\rm Ext}^1(S_i,S_j)\rightarrow{\rm Ext}^1(U,S_j)\rightarrow{\rm Ext}^1(P',S_j)=0.
$$
Thus $[S_i,S_j]^1\not=0$, which implies $[S_i,S_j]^1=1$ since this extension space has a basis indexed by the arrows $i\rightarrow j$. But this in turn implies $[U,S_j]^1=1$ (and $[P',S_j]=0$).\end{proof}

Dual to the above, we define $\mathcal{C}^\vee$ as the subcategory of representations without injective summands, or, equivalently, representations $M$ such that ${\rm Hom}(A^*,M)=0$,
where $A^*=\bigoplus_{j\in Q_0} I_j$. Dualizing the above lemmas, we find:

\begin{cor}\label{cordual} A representation $U$ belongs to $\mathcal{C}^\vee_{\rm simp}$ 
if and only if it belongs to $\mathcal{C}^\vee$ and every factor by a simple subrepresentation 
is already injective. For such a representation, $[S_i,U]^1\not=0$ already implies $[S_i,U]^1=1$.
\end{cor}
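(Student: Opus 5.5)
The plan is to obtain Corollary~\ref{cordual} from Lemma~\ref{Lem:Almost projective} and Lemma~\ref{lemij} by duality, rather than re-prove it. I will use the standard duality $D={\rm Hom}_{\bC}(-,\bC)$. It is a contravariant equivalence ${\rm rep}_Q\to{\rm rep}_{Q^{op}}$ with the following properties: it sends projectives of $Q$ to injectives of $Q^{op}$ and vice versa, with $D(I_j^{Q})=P_j^{Q^{op}}$; it sends $S_i$ to $S_i$; it reverses subrepresentations and quotients; and it gives ${\rm Ext}^1_Q(X,Y)\cong{\rm Ext}^1_{Q^{op}}(DY,DX)$. Since $Q^{op}$ is again Dynkin, the earlier results apply to $Q^{op}$.

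First, $D$ carries $\mathcal{C}^\vee(Q)$ onto $\mathcal{C}(Q^{op})$, because injective summands correspond to projective summands. The notion of relative simple is defined intrinsically from the subcategory: an indecomposable object with no proper nonzero subobject $U'$ such that $U'$ and $U/U'$ both lie in the category. By exactness of $D$ this notion is transported, so $D$ maps $\mathcal{C}^\vee_{\rm simp}(Q)$ bijectively onto $\mathcal{C}_{\rm simp}(Q^{op})$.

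Second, I apply Lemma~\ref{Lem:Almost projective} to $DU$. It says $DU\in\mathcal{C}(Q^{op})$ and every maximal subrepresentation of $DU$ is projective. A maximal subrepresentation $W\subset DU$ has simple quotient $DU/W$. Dualizing, $DW\cong U/S$ where $S=D(DU/W)$ is a simple subrepresentation of $U$, and every simple subrepresentation arises this way. Since $DW$ is injective exactly when $W$ is projective, the condition becomes: every factor of $U$ by a simple subrepresentation is injective. This gives the first statement.

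Third, I apply Lemma~\ref{lemij} to $DU$. It gives that $[DU,S_i]^1_{Q^{op}}\ne0$ implies $[DU,S_i]^1_{Q^{op}}=1$. Since ${\rm Ext}^1_{Q^{op}}(DU,S_i)\cong{\rm Ext}^1_Q(S_i,U)$, this is exactly the second statement.

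The only real check is that both the class of relative simples and ``maximal subrepresentation'' transform correctly under $D$. The equivalence of the definition of relative simple with ``indecomposable and every proper nonzero subrepresentation has a projective summand'' should be verified on the $Q^{op}$ side, which amounts to applying the definition directly there. Everything else is formal.
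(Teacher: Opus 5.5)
Your proposal is correct and follows the paper's argument, which obtains the corollary simply by dualizing Lemma~\ref{Lem:Almost projective} and Lemma~\ref{lemij}. You make that duality explicit via $D={\rm Hom}_{\bC}(-,\bC)$ and the Dynkin quiver $Q^{op}$, correctly turning maximal subrepresentations of $DU$ into factors of $U$ by simple subrepresentations, and ${\rm Ext}^1_{Q^{op}}(DU,S_i)$ into ${\rm Ext}^1_Q(S_i,U)$.
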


By one of the main results of Auslander-Reiten theory, the Auslander-Reiten translation $\tau$ 
induces an equivalence of categories
$$\tau:\mathcal{C}\stackrel{\sim}{\rightarrow}\mathcal{C}^\vee.$$
We also recall the Auslander-Reiten formula
$${\rm Ext}^1(V,W)\simeq{\rm Hom}(W,\tau V)^*\simeq{\rm Hom}(\tau^{-1}W,V)^*.$$

\begin{lem}\label{lemp0u} If $U\in\mathcal{C}_{\rm simp}$, then $P_0(U)$ is the direct 
sum of all $P_i$ such that $i$ is a source in the support of ${\rm\bf dim}(U)$.
\end{lem}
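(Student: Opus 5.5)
We have to show two things: that $P_0(U)$ is multiplicity free, and that $P_i$ occurs in it exactly when $i$ is a source of the support of $\bdim(U)$. Since the multiplicity of $P_i$ in $P_0(U)$ is $[U,S_i]$, equivalently we will show that $[U,S_i]\le 1$, and that $[U,S_i]\neq 0$ if and only if $i$ is a source in ${\rm supp}(U)$.

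\textbf{Step 1 (the easy direction).} For any representation $V$, $[V,S_i]\neq0$ forces $i\in{\rm supp}(V)$, and $i$ must be a source of the support. Indeed, if an arrow $\alpha:j\to i$ has $j\in{\rm supp}(V)$, then $V_\alpha$ is nonzero because the representation is supported on a connected set and indecomposable. This is a heuristic rather than a proof, so I will argue more robustly through the top of $V$: the multiplicity of $S_i$ in the top $V/{\rm rad}V$ is $\dim V_i-\dim\sum_{\alpha:j\to i}{\rm im}V_\alpha$. So what we actually need is the converse together with multiplicity one.

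\textbf{Step 2 (multiplicity one).} Suppose $[U,S_i]\ge 2$. Then there is a surjection $U\to S_i^2$. Its kernel $K$ is projective by Lemma~\ref{Lem:Almost projective}, being contained in a maximal subrepresentation. Now pick any surjection $U\to S_i$ with kernel $K'\supset K$, $K'/K\simeq S_i$. Then $K'$ is projective, so $K'$ is an extension of $S_i$ by the projective $K$. However, since $K'$ is projective, the submodule $K'$ containing a copy of $S_i$ at the top with $K'/K\cong S_i$ implies that $P_i$ is a summand of $K'$. Indeed, $K'\to S_i$ factors through the projective cover, and projectivity of $K'$ splits a $P_i$ off. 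Hence ${\rm Hom}(P_i,U)$ has a map $P_i\to K'\subset U$ lifting through $U\to S_i$. Comparing the two maps to $S_i^2$ gives a sub $P_i\subset U$ mapping onto one copy of $S_i$. The quotient $U/P_i$ is nonzero, since it still surjects onto $S_i$, and it lies in $\mathcal{C}$ if it has no projective summands. A cleaner approach is to use the dual statement, Corollary~\ref{cordual}, via $\tau$: $[U,S_i]=[\tau U,\tau S_i]$ fails when $S_i$ is projective. I will therefore take the direct route: the relative simplicity of $U$ together with the sequence $0\to P_i\to U\to U/P_i\to0$ makes $U/P_i$ contain a projective summand $P'$. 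Lifting $P'$ to $U$ splits it off $U$, contradicting $U\in\mathcal{C}$. So $[U,S_i]\le1$.

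\textbf{Step 3 (sources occur).} Let $i$ be a source of ${\rm supp}(U)$. Then no arrow $j\to i$ with $j$ in the support exists, so $({\rm rad}U)_i=\sum_{\alpha:j\to i}{\rm im}U_\alpha=0$ while $U_i\neq0$. Hence $S_i$ is in the top and $[U,S_i]\ge1$.

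\textbf{Step 4 (only sources occur).} Conversely, suppose $[U,S_i]=1$. I use Lemma~\ref{lemij}: every $P_j$ in $P_1(U)$ receives an arrow from every $i$ in $P_0(U)$. Assume there is an arrow $k\to i$ with $k\in{\rm supp}U$. The plan is to show that $P_k$ is then in $P_0(U)$ or $P_1(U)$, and derive a contradiction. If $P_k$ is in $P_0(U)$, then $\dim$ counting at vertex $i$ in the minimal resolution gives $\dim U_i=\sum_{P_0}-\sum_{P_1}$. The vertices contributing to $P_0(U)_i$ include $k$ and $i$, and by Lemma~\ref{lemij} the summands of $P_1(U)$ sit at direct successors of all tops. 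Since $Q$ is a tree, no $P_j$ in $P_1(U)$ can be a successor of both $k$ and $i$ and also cancel the path $k\to i$, unless $j=i$; but $j=i$ is excluded because $i\in P_0$ and directedness. I expect this combinatorial tree argument, which uses Lemma~\ref{lemij} to control which $P_j$ appear and then runs dimension counting at vertices adjacent to tops, to be the main obstacle. In particular, ruling out a non-source top $i$ whose predecessor $k$ lies in the support only through radical contributions is the delicate part. If that fails, I will fall back on the top-formula: a top vertex $i$ with $({\rm rad}U)_i\neq U_i$ and $({\rm rad}U)_i\ne0$. The quotient by the subrepresentation generated at the other tops is then nonprojective, and that subrepresentation is a proper non-projective subrepresentation, which contradicts Lemma~\ref{Lem:Almost projective}.
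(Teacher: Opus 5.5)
Step 3 (sources of the support lie in the top) is fine. The two substantive claims of the lemma, however, are not established.

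\textbf{Multiplicity one (Step 2).} The decisive sentence is that relative simplicity of $U$, applied to $0\to P_i\to U\to U/P_i\to 0$, forces $U/P_i$ to have a projective summand. Nothing supports this. Relative simplicity only forbids proper subrepresentations that are not projective; a projective subrepresentation such as $P_i$ is exactly what it permits. In fact $\mathcal{C}$ is closed under quotients, because a surjection of $U$ onto a projective splits. So $U/P_i$ \emph{never} has a projective summand, and the contradiction you reach is drawn from a false premise.

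Your discarded $\tau$-idea was the right instinct, but you used the wrong identity. The paper uses the Auslander--Reiten formula $[U,S_i]=[S_i,\tau U]^1$, which holds for every $i$, sinks included, since $U$ has no projective summands. It then uses $\tau U\in\mathcal{C}^\vee_{\rm simp}$, because $\tau:\mathcal{C}\to\mathcal{C}^\vee$ is an equivalence, and Corollary \ref{cordual}, which gives $[S_i,\tau U]^1\le 1$.

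\textbf{Only sources occur (Step 4).} You leave this step open, and the fallback does not work. The assertion that the subrepresentation generated at the other top vertices is non-projective is given without reason, and it is false in general. Take the $D_4$ quiver $1\to 2$, $2\to 3$, $2\to 4$ and the indecomposable $U$ of dimension vector $(1,2,1,1)$, given by three distinct lines in $U_2$. Vertex $2$ is a top vertex that is not a source of the support. Yet the subrepresentation generated by $U_1$ has dimension vector $(1,1,1,1)$ and is isomorphic to $P_1$, which is projective. This $U$ is not almost projective, which shows that the hypothesis has to enter the argument somewhere. The same example also refutes the opening claim of your Step 1.

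No dimension counting is needed; your Step 3 and Lemma \ref{lemij} suffice.
\begin{enumerate}
\item Suppose $i$ is a top vertex that is not a source of the support.
\item The support of $U$ is connected, so walking backwards along arrows inside it from $i$ reaches a source $i'\neq i$ of the support, with a path $i'\leadsto i$.
\item By Step 3, $P_{i'}$ is a summand of $P_0(U)$.
\item Since $U$ is not projective, $P_1(U)$ has some summand $P_j$.
\item Lemma \ref{lemij} then gives arrows $i\to j$ and $i'\to j$. Together with the path $i'\leadsto i$, these form a cycle in the underlying tree of $Q$, which is a contradiction.
\end{enumerate}
This is exactly the paper's argument.
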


\begin{proof} 
Denote by $Q_0^I\subset Q_0$ the set of vertices $i$ such that $P_i$ is a summand of $P_0(U)$, 
equivalently, such that $[U,S_i]\not=0$. Denote by $Q_0^J\subset Q_0$ the set of vertices $j$ 
such that $P_j$ is a summand of $P_1(V)$, equivalently, such that $[U,S_j]^1\not=0$. Note that 
both subsets are non-empty since $U$ is not projective. The set $Q_0^I$ is clearly a subset 
of the support $Q_0^S\subset Q_0$ of ${\rm\bf dim}(U)$.
It necessarily has to contain all sources in $Q_0^S$ to ensure that the support of the 
dimension vector of $P_0(U)$ contains $Q_0^S$. By Lemma \ref{lemij}, there exists an arrow 
from every vertex in $Q_0^I$ to every vertex in $Q_0^J$.
Now assume there exists $i\in Q_0^I$ which is not a source in $S$. Then there exists a source 
$i'\in Q_0^S\subset Q_0^I$ and a path from $i'$ to $i$. On the other hand, for a vertex $j\in Q_0^J$ 
we find arrows $i\rightarrow j\leftarrow i'$, contradicting the fact that $Q$ is a tree. 
This proves that $Q_0^I$ consists precisely of the sources in $Q_0^S$. To prove that the $P_i$ 
for $i\in Q_0^I$ appear in $P_0(U)$ with multiplicity $1$, we use the Auslander-Reiten formula 
to see that $0\not=[U,S_i]=[S_i,\tau U]^1$, and $\tau U$ belongs to $\mathcal{C}^\vee_{\rm simp}$. 
By Corollary \ref{cordual}, we find  $1=[S_i,\tau U]^1=[U,S_i]$, yielding the claim.
\end{proof}

\begin{cor}\label{cornew} In the setting of Lemma \ref{lemij}, the multiplicity of $P_i$ in $P_0(U)$ equals $1$. Moreover, one of $P_0(U)$ or $P_1(U)$ is indecomposable, and the other one has at most three indecomposable direct summands.
\end{cor}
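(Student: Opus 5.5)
The first claim, that $P_i$ occurs in $P_0(U)$ with multiplicity one, follows directly from Lemma \ref{lemp0u}. That lemma shows $P_0(U)$ is the direct sum, each summand taken once, of the $P_{i}$ with $i$ a source in the support $Q_0^S$ of ${\rm\bf dim}(U)$. Alternatively, one can argue as at the end of its proof: $[U,S_i]=[S_i,\tau U]^1$ by the Auslander--Reiten formula, and $\tau U\in\mathcal{C}^\vee_{\rm simp}$, so Corollary \ref{cordual} forces this number to be $1$. Multiplicity one for the summands of $P_1(U)$ is already part of Lemma \ref{lemij}.

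For the second claim, the plan is to reuse the combinatorial structure from the proof of Lemma \ref{lemp0u}. Let $Q_0^I$ be the set of indices of summands of $P_0(U)$ and $Q_0^J$ the set of indices of summands of $P_1(U)$; both are non-empty because $U$ is not projective. By Lemma \ref{lemij}, for every $i\in Q_0^I$ and every $j\in Q_0^J$ there is an arrow $i\to j$ in $Q$. Hence the underlying graph of $Q$ contains the complete bipartite graph on $Q_0^I\sqcup Q_0^J$. Since $Q$ is a tree, it has no cycles, and $K_{2,2}$ is a $4$-cycle, so $\min(|Q_0^I|,|Q_0^J|)\le 1$. Thus one of $P_0(U)$, $P_1(U)$ is indecomposable, using the multiplicity-one statements above.

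Suppose, say, $Q_0^J=\{j\}$. Then every $i\in Q_0^I$ is a neighbour of $j$, so $|Q_0^I|\le \deg(j)$. In a Dynkin quiver of type $A$, $D$ or $E$ every vertex has degree at most $3$, so $P_0(U)$ has at most three indecomposable summands. The case $Q_0^I=\{i\}$ is symmetric: every $j\in Q_0^J$ is adjacent to $i$, giving $|Q_0^J|\le 3$.

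The only point needing care is that the counts of indecomposable summands equal $|Q_0^I|$ and $|Q_0^J|$, i.e. that all multiplicities are one. That is exactly what the first part and Lemma \ref{lemij} provide, so I expect no real obstacle. The degree bound on Dynkin diagrams is standard.
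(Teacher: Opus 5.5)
Your proposal is correct and follows essentially the same route as the paper. Multiplicity one is taken from the end of the proof of Lemma~\ref{lemp0u}, and the second claim comes from every vertex of $Q_0^I$ being joined by an arrow to every vertex of $Q_0^J$ in the Dynkin tree $Q$; the paper compresses this into the bare assertion $|Q_0^I|\cdot|Q_0^J|\le 3$, which you correctly unpack via the absence of $4$-cycles and the degree bound $3$ for ADE diagrams.
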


\begin{proof} In the terminology of the previous proof, we know that there exists an arrow between any vertex of $Q_0^I$ and any vertex of $Q_0^J$, But since $Q$ is Dynkin, this implies $|Q_0^I|\cdot|Q_0^J|\leq 3$, yielding the second claim. The first claim is already derived in the previous proof.\end{proof}


Denote by $P_{\rm min}$ the direct sum of all $P_i$ where $i$ is a source in $Q$. Dually, denote by $I_{\rm min}$ the direct sum of all $I_i$ where $i$ is a sink in $Q$. In other words, $P_{\rm min}$ (resp.~$I_{\rm min}$) is the minimal projective (resp.~injective) representation whose dimension vector has support $Q$.

\begin{cor}\label{corp0u} For every $U\in \mathcal{C}_{\rm simp}$, the projective cover $P_0(U)$ embeds into $P_{\rm min}$.
\end{cor}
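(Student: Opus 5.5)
By Lemma~\ref{lemp0u}, $P_0(U)=\bigoplus_{i\in S}P_i$, where $S$ is the set of sources of the support $Q_0^S$ of ${\rm\bf dim}(U)$, each occurring with multiplicity one. The plan is to show that for every $i\in S$ there is a source $i'$ of $Q$ with $P_i\hookrightarrow P_{i'}$, with distinct $i$ mapped to distinct $i'$, and with the sum of these embeddings still injective.

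First, the embeddings of indecomposables. Since $Q$ is a tree (Dynkin), $P_i$ embeds into $P_{i'}$ whenever there is a path $i'\to\cdots\to i$. The map is right multiplication by that path, sending a path $p$ from $i$ to $j$ to the concatenation of the path $i'\to i$ with $p$; it is injective because paths in a tree are unique. For a given $i\in S$, walk backwards along arrows from $i$. Since $Q$ is acyclic and finite, we reach a source $i'$ of $Q$ with a path $i'\to\cdots\to i$.

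Second, injectivity of the sum. Choose one such source $i'=\sigma(i)$ for each $i\in S$. We need $\sigma$ to be injective and the images to be independent. The images of $P_i$ inside $P_{i'}$ are spanned by the paths from $i'$ passing through $i$. So if $\sigma$ is injective, the map $\bigoplus_{i\in S}P_i\to\bigoplus_{i\in S}P_{\sigma(i)}\subseteq P_{\min}$ is a direct sum of injections, hence injective.

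Third, and this is the main obstacle, the injectivity of $\sigma$. Suppose $i_1\neq i_2\in S$ share a source $i'$, with paths $i'\to\cdots\to i_1$ and $i'\to\cdots\to i_2$. Use Lemma~\ref{lemij}/Corollary~\ref{cornew}: there is a vertex $j\in Q_0^J$ (a summand of $P_1(U)$, nonempty since $U$ is not projective) with arrows $i_1\to j\leftarrow i_2$. Then the two paths from $i'$, followed by these arrows, give two distinct walks from $i'$ to $j$ in the underlying tree. Distinctness holds because the paths end in different arrows into $j$; these walks are paths, not backtracking, since $j$ is not on either path (as $j$ has incoming arrows from both $i_1$ and $i_2$ and $Q$ is acyclic). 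This contradicts the tree property. Hence distinct elements of $S$ have disjoint sets of ancestor sources, and any choice of $\sigma$ is injective. Combining the three steps gives $P_0(U)\hookrightarrow P_{\min}$.
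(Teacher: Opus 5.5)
Your argument is correct, but it takes a different route from the paper.

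The paper only compares dimension vectors. The $j$-th entry of $\bdim P_0(U)$ counts the sources of the support that have a path to $j$, and the $j$-th entry of $\bdim P_{\rm min}$ counts the sources of $Q$ that have a path to $j$, so $\bdim P_0(U)\le\bdim P_{\rm min}$. The embedding is then produced abstractly by Bongartz's criterion (Proposition~\ref{constremb}), applied to the degeneration $P_{\rm min}\degg P_0(U)\oplus T$. This degeneration holds because $P_{\rm min}$ is exceptional, and $[P_0(U),P_{\rm min}]=[P_0(U),P_0(U)\oplus T]$ because homomorphisms out of projectives depend only on dimension vectors.

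You instead write the embedding down directly, as a direct sum of path-multiplication maps $P_i\hookrightarrow P_{\sigma(i)}$ into pairwise distinct summands of $P_{\rm min}$. Your key input is Step~3: via Lemma~\ref{lemij}, distinct sources of the support have disjoint sets of ancestor sources in $Q$. This buys you two things:
\begin{itemize}
\item It needs no degeneration theory, and it gives a sharper, explicit conclusion: $P_0(U)$ embeds summand by summand into the direct summand $\bigoplus_{i}P_{\sigma(i)}$ of $P_{\rm min}$.
\item It makes explicit a step the paper passes over. The paper's inequality of dimension vectors itself needs an injective assignment from sources of the support with a path to $j$ to sources of $Q$ with a path to $j$, and that is essentially your Step~3. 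The paper uses the same tree argument in the proof of the following lemma.
\end{itemize}
In exchange, the paper's route is shorter once Bongartz's theorem is available, and it matches the degeneration-theoretic style used throughout.
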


\begin{proof} By the previous lemma, $P_0(U)$ is the direct sum of the $P_i$ for $i$ a source in the support $Q_0^S$ of ${\rm\bf dim}(U)$. Thus, the entry of ${\rm\bf dim}(P_0(U))$ at a vertex $j$ equals the number of sources in $Q_0^S$ admitting a path to $j$ (since there is at most one path between any two vertices in a Dynkin quiver). 
Similarly, the entry of ${\rm\bf dim}(P_{\rm min})$ at $j$ equals the number of sources in $Q$ admitting a path to $j$. This yields ${\rm\bf dim}(P_0(U))\leq{\rm\bf dim}(P_{\rm min})$. Choosing an arbitrary representation $T$ of dimension vector ${\rm\bf dim}(P_{\rm min})-{\rm\bf dim}(P_0(U))$, we have a degeneration $P_{\rm min}\leq_{\rm deg}P_0(U)\oplus T$, and thus $P_0(U)$ embeds into $P_{\rm min}$ by Proposition \ref{constremb}.\end{proof}

\section{The boundary representations}\label{sec:boundrep}

\begin{lem} Let $\mathcal{U}_{P,I}\subset R_{\bf d}(Q)$ be the open subset of representations degenerating to $P\oplus I$. Then the complement of $\mathcal{U}_{P,I}$ is the union of the closed subsets
$$\mathcal{A}_U=\{M\, |\, [U,M]>[U,I]\},$$
where $U$ ranges over $\mathcal{C}_{\rm simp}$. 
\end{lem}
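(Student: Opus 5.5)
The plan is to identify $\mathcal{U}_{P,I}$ with the set of $M$ satisfying $[U,M]\le[U,I]$ for all $U\in\mathcal{C}_{\rm simp}$, using Bongartz's criterion $M\degg N \iff [X,M]\le[X,N]$ for all $X$, applied with $N=P\oplus I$. The key observation is that for every $U\in\mathcal{C}$ we have ${\rm Hom}(U,P)=0$, because $P$ is a summand of some $A^{\oplus m}$ and ${\rm Hom}(U,A)=0$. Hence $[U,P\oplus I]=[U,I]$ for all $U\in\mathcal{C}$.

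\emph{Closedness and openness.} Each $\mathcal{A}_U$ is closed in $R_\bd(Q)$ by upper semicontinuity of $M\mapsto\dim{\rm Hom}(U,M)$. Since $Q$ is Dynkin, $\mathcal{C}_{\rm simp}$ is finite, so the union of the $\mathcal{A}_U$ is closed. Once the equality of sets is proved, this also shows that $\mathcal{U}_{P,I}$ is open.

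\emph{Easy inclusion.} Suppose $M\degg P\oplus I$. Bongartz gives $[U,M]\le[U,P\oplus I]=[U,I]$ for every $U\in\mathcal{C}_{\rm simp}$. Thus $M$ lies in no $\mathcal{A}_U$.

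\emph{Main inclusion.} Suppose $[U,M]\le[U,I]$ for all $U\in\mathcal{C}_{\rm simp}$. By additivity of $[-,M]$ it suffices to check $[X,M]\le[X,P\oplus I]$ for indecomposable $X$.

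If $X$ is projective, $[X,M]$ depends only on the dimension vector. Since $\bdim M=\bd=\bdim(P\oplus I)$, we get equality.

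Otherwise $X\in\mathcal{C}$, and we prove $[X,M]\le[X,I]$ for all $X\in\mathcal{C}$ by induction on $\dim X$. If $X\in\mathcal{C}_{\rm simp}$, this is the hypothesis. If $X\notin\mathcal{C}_{\rm simp}$, then by definition of relative simplicity there is a short exact sequence $0\to X'\to X\to X''\to 0$ with $X',X''$ nonzero objects of $\mathcal{C}$. Applying the left exact functor ${\rm Hom}(-,M)$ gives
\[
[X,M]\le[X',M]+[X'',M]\le[X',I]+[X'',I]=[X,I],
\]
where the last equality holds because ${\rm Hom}(-,I)$ is exact, $I$ being injective. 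Combining this with $[X,I]=[X,P\oplus I]$ for $X\in\mathcal{C}$, Bongartz's criterion yields $M\degg P\oplus I$.

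The only point needing care is the characterization of non-relatively-simple objects of $\mathcal{C}$. If $X\in\mathcal{C}$ is decomposable, we split off a summand, and both summands lie in $\mathcal{C}$. If $X$ is indecomposable but not relatively simple, we need a proper nonzero subobject $X'\in\mathcal{C}$. The proof of Lemma~\ref{Lem:Almost projective} provides it: a proper subrepresentation with a nonzero non-projective part $U''$ gives the subobject $U''\in\mathcal{C}$. The quotient $X/U''$ lies in $\mathcal{C}$ because $\mathcal{C}$ is closed under quotients, and it is nonzero because $U''$ is proper. This is the step I would double-check most carefully, but it is already contained in the earlier lemma.
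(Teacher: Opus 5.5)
Your proof is correct and is essentially the paper's argument: the paper takes a counterexample $U$ to Bongartz's criterion of minimal dimension, observes that it is indecomposable and non-projective (so $[U,P\oplus I]=[U,I]$), and, if $U$ is not relatively simple, splits it via a subrepresentation $V$ with $V,U/V\in\mathcal{C}$, using left exactness of ${\rm Hom}(-,M)$ and exactness of ${\rm Hom}(-,I)$ — which is exactly your induction on $\dim X$, phrased as a minimal counterexample. You additionally spell out the easy inclusion and the closedness of the $\mathcal{A}_U$ (hence openness of $\mathcal{U}_{P,I}$), which the paper leaves implicit.
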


\begin{proof} If $M$ does not degenerate to $P\oplus I$, then $[U,M]>[U,P\oplus I]$ for a representation $U$; choose $U$ of minimal dimension with this property. Then $U$ is indecomposable, and then necessarily non-projective, so $U\in\mathcal{C}$ and $[U,M]>[U,I]$. If $U$ is not simple in $\mathcal{C}$, it admits a subrepresentation $V\subset U$ such that $V,U/V\in\mathcal{C}$. By minimality of $U$, we then have $[V,M]\leq[V,I]$ and $[U/V,M]\leq[U/V,I]$, which implies $[U,M]\leq[U,I]$, a contradiction.
\end{proof}

\begin{lem} For $U\in\mathcal{C}_{\rm simp}$, the set $\mathcal{A}_U$ is irreducible.
\end{lem}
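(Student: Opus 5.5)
The plan is to show that $\mathcal{A}_U$ is the closure of a single $G_\bd$-orbit, namely that of a representation $M_U$. Since $Q$ is Dynkin, $\mathcal{A}_U$ is a finite union of orbits. It therefore suffices to find $M_U\in\mathcal{A}_U$ such that every $M\in\mathcal{A}_U$ is a degeneration of $M_U$.

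The first step is to reformulate the condition $[U,M]>[U,I]$. By the Euler form, $[U,M]-[U,M]^1=\langle \bdim U,\bd\rangle=[U,P\oplus I]-[U,P\oplus I]^1$. We have $[U,P]=0$ because $U\in\mathcal{C}$, and $[U,I]^1=0$ because $I$ is injective. Hence $[U,M]>[U,I]$ is equivalent to $[U,M]^1<[U,P]^1$. By the Auslander--Reiten formula, $[U,M]^1=[M,\tau U]$, so $\mathcal{A}_U$ is described by a drop in $[M,\tau U]$.

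The second step is to look for a canonical representation in $\mathcal{A}_U$. Since $[U,M]>[U,I]$ is an upper semicontinuous condition, the natural candidate for $M_U$ is a generic extension built from $U$. Concretely, $M\in\mathcal{A}_U$ should mean that $M$ admits a morphism from $U$ "not accounted for by $I$". I would take a nonzero map $U\to M$ whose image $U'$ is a quotient of $U$; its kernel is projective by Lemma~\ref{Lem:Almost projective}. The key claim is that if $M\in \mathcal{A}_U$, then $U$ itself embeds into $M$. The argument: if every map $U\to M$ has nonzero projective kernel $K$, compare with the injective envelope, using that the minimal presentation of $U$ is tightly controlled by Lemma~\ref{lemij} and Lemma~\ref{lemp0u}. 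I expect this step, producing a short exact sequence $0\to U\to M\to M/U\to 0$ with $M/U$ constrained in dimension vector $\bd-\bdim U$, to be the main obstacle.

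Granting the embedding $U\hookrightarrow M$, the locus of $M$ with $U\subset M$ is the image of an irreducible variety: pairs consisting of a subrepresentation isomorphic to $U$ and a representation structure on the quotient. Indeed, $\{(M,V): V\in\Gr_{\bdim U}(M),\ V\simeq U\}$ fibers over the irreducible orbit of $U$ with affine-space fibers given by the quotient representation space and extension data. Its image is therefore irreducible, and equals the orbit closure of the generic extension $M_U:=N\ast U$, where $N$ is the generic representation of dimension vector $\bd-\bdim U$, by \cite[Prop.~2.4]{Re01}. It remains to check the converse inclusion: every $M$ containing $U$ satisfies $[U,M]>[U,I]$. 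For this I would verify $[U,M_U]\geq [U,U]+[U,N]>[U,I]$ using the Euler form and the fact that $[U,I]$ depends only on $\bdim U$ and $I$. This gives $\mathcal{A}_U=\overline{G_\bd M_U}$, which is irreducible.
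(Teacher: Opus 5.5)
Your plan fails at the identification of $\mathcal{A}_U$ with the locus of representations containing $U$. The last inclusion you need, namely that every $M$ with $U\hookrightarrow M$ satisfies $[U,M]>[U,I]$, is false. One copy of $U$ contributes only $1$ to $[U,M]$, whereas $[U,I]$ can be large. Since $[U,P]=0$, the representation $P\oplus I$ never lies in $\mathcal{A}_U$, yet it contains $U$ whenever $U$ embeds into $I$.

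Concretely, take $Q=1\to 2$, $P=P_1\oplus P_2$, $I=I_1\oplus I_2$ and $U=S_1$. The generic $N$ of dimension vector $\bd-\bdim(S_1)=(2,3)$ is the projective $P_1^2\oplus P_2$, so your candidate is $N\ast S_1=N\oplus S_1=P\oplus I$. But $[S_1,P\oplus I]=1=[S_1,I]$. Thus $\overline{G_\bd(N\ast U)}$ is strictly larger than $\mathcal{A}_U$, and your inequality $[U,U]+[U,N]>[U,I]$ fails (here $[S_1,N]=0$).

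Your forward ``key claim'' is also false in general, because a nonzero map out of an almost projective $U$ can have a nonzero projective kernel. Take $1\to 2\leftarrow 3\to 4$ with $U=U(14)$, $P=P_1\oplus P_3$ and $I=I_2=U(13)$. The representation $M=U(13)^2\oplus S_2\oplus S_4$ has $[U,M]=2>1=[U,I]$, but every map $U\to M$ vanishes at vertex $4$, so $U$ does not embed.

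There is also a sign slip in your first step. The condition is equivalent to $[U,M]^1>[U,P]^1$, a jump rather than a drop, as it must be for a closed condition.

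The paper does not look for a subobject at all. It writes $[U,M]>[U,I]$ as a rank condition on $\Phi_M:{\rm Hom}(P_0(U),M)\to{\rm Hom}(P_1(U),M)$. Lemmas \ref{lemij} and \ref{lemp0u} show that, for almost projective $U$, this map is the block matrix of the $M_\alpha$ for the arrows from $Q_0^I$ to $Q_0^J$, up to nonzero scalars. So $M\mapsto\Phi_M$ is linear (essentially a coordinate projection), and $\mathcal{A}_U$ is the preimage of a determinantal variety, hence irreducible. This linearity is exactly where almost projectivity enters.

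The representation $M_U$ (Corollary \ref{cormu}) and the correct subobject, a quotient of $U^{[U,I]+1}$ (Lemma \ref{lemconstructionmu}), are derived \emph{from} irreducibility. Suppose you repaired your route with such subobjects. You would get $\mathcal{A}_U$ as a finite union, over the isoclasses of quotients $V$ of $U^{[U,I]+1}$ with $[U,V]\geq[U,I]+1$, of the irreducible loci $\{M: V\hookrightarrow M\}$. Showing that one of these loci dominates the others is precisely the irreducibility you set out to prove, so the gap would remain.
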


\begin{proof} For arbitrary $U$, the condition $[U,M]>a$ for a fixed $a$ can be interpreted as a rank condition. Namely, using a minimal projective resolution of $U$ as above, we find an exact sequence
$$0\rightarrow{\rm Hom}(U,M)\rightarrow{\rm Hom}(P_0(U),M)\stackrel{\Phi_M}{\rightarrow}{\rm Hom}(P_1(U),M),$$
thus, $[U,M]>a$ if and only if the rank of $\Phi_M$ is strictly less than $b=[P_0(U),M]-a$. 
In other words, considering the map 
\[
\Phi: R_{\bf d}(Q)\to {\rm Hom}({\rm Hom}(P_0(U),M),{\rm Hom}(P_1(U),M))
\]
given by $M\mapsto \Phi_M$, the set $\mathcal{A}_U$ is the inverse image of the set of linear maps of 
rank strictly less than $b$. The latter is always irreducible, but for general $U$, there is not much 
control about the fibres of $\Phi$. Now assume that $U\in\mathcal{C}_{\rm simp}$. By Lemma \ref{lemij}, 
we have $P_0(U)=\bigoplus_{i\in Q_0^I}P_i$ 
and $P_1(U)=\bigoplus_{j\in Q_0^J}P_j$ for subsets $Q_0^I,Q_0^J\subset Q_0$ such that there is an arrow 
from every vertex in $Q_0^I$ to every vertex in $Q_0^J$. 
This implies that the map $\Phi_M$ above is nothing else than a block matrix with entries being 
(scalar multiples of) the matrices $M_\alpha$ representing the arrows $\alpha$ between vertices in 
$Q_0^I$ and $Q_0^J$, respectively, in a representation $M$. In particular, the map $\Phi$ is linear. 
In this case, we can thus conclude that $\mathcal{A}_U$ is also irreducible.
\end{proof}

\begin{cor}\label{cormu} For every $U\in\mathcal{C}_{\rm simp}$, there exists a representation $M_U$ such that $[U,M]>[U,I]$ if and only if $M$ is a degeneration of $M_U$. 
\end{cor}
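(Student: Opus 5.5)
The approach is to derive the corollary directly from the two preceding lemmas, using that $R_{\bf d}(Q)$ has only finitely many $G_{\bf d}$-orbits. By the previous lemma, $\mathcal{A}_U=\{M\mid [U,M]>[U,I]\}$ is an irreducible subset of $R_{\bf d}(Q)$. It is closed, since $M\mapsto [U,M]$ is upper semicontinuous; this is also visible from the rank description, as $[U,M]>a$ means ${\rm rk}\,\Phi_M<b$ with $\Phi$ linear. It is $G_{\bf d}$-stable, since $[U,M]$ depends only on the isoclass of $M$.

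Next I would write $\mathcal{A}_U$ as the finite union of the $G_{\bf d}$-orbits it contains; finiteness holds because $Q$ is Dynkin (Gabriel's theorem). Taking closures, $\mathcal{A}_U=\bigcup \overline{G_{\bf d}M}$ over finitely many $M\in\mathcal{A}_U$, and each orbit closure lies inside $\mathcal{A}_U$ because $\mathcal{A}_U$ is closed. Since $\mathcal{A}_U$ is irreducible, it equals one of these closures, say $\overline{G_{\bf d}M_U}$. That is, $G_{\bf d}M_U$ is an orbit dense in $\mathcal{A}_U$.

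Then $M\in\mathcal{A}_U$ if and only if $M\in\overline{G_{\bf d}M_U}$, i.e.\ if and only if $M_U\degg M$. This is exactly the statement, with the paper's wording ``$M$ is a degeneration of $M_U$''.

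The only point needing care is nonemptiness of $\mathcal{A}_U$, because otherwise no $M_U$ exists. I would first check this abstractly. Let $\mathcal{A}_U$ be empty; then every $M$ of dimension vector ${\bf d}$ satisfies $[U,M]\le[U,I]=[U,P\oplus I]$ (as $U\in\mathcal{C}$ has ${\rm Hom}(U,P)=0$). I would then look for a specific $M$ violating this. A natural candidate is $M=P'\oplus U\oplus I''$, obtained by modifying $P\oplus I$. One uses the projective cover $P_0(U)\hookrightarrow P_{\rm min}$ from Corollary \ref{corp0u} (when $P_{\rm min}$ is a summand of $P$), or dually replaces a piece of $I$, to realize $U$ as a direct summand in dimension vector ${\bf d}$ while increasing $[U,-]$ by the contribution $[U,U]=1$.

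Without the standing hypotheses of Theorem A, nonemptiness may fail. In that case I would state the corollary for those $U$ with $\mathcal{A}_U\neq\emptyset$, noting that empty $\mathcal{A}_U$ contribute nothing to the union in the earlier lemma. I expect this bookkeeping to be the only real obstacle; the main argument is the irreducibility-plus-finitely-many-orbits step.
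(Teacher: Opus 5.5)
Your argument is correct and is the same as the paper's: $\mathcal{A}_U$ is closed, $G_{\bf d}$-stable and irreducible, so, since there are finitely many orbits, it is the closure of a single orbit $G_{\bf d}M_U$. Your nonemptiness caveat is a fair point that the paper leaves implicit, and the quickest check is this: ${\rm gr}(P\oplus I)=\bigoplus_i S_i^{d_i}$ lies in every orbit closure, so $\mathcal{A}_U\neq\emptyset$ if and only if $[U,{\rm gr}(P\oplus I)]>[U,I]$, which holds whenever $P\supseteq P_{\rm min}$ or $I\supseteq I_{\rm min}$.
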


\begin{proof} The set $\mathcal{A}_U$ is anyway closed and stable under the structure group $G_{\bf d}$, 
and irreducible by the previous lemma, thus it equals the closure of the orbit of a representation $M_U$, 
which thus has the claimed property.
\end{proof}

The representation $M_U$ can be constructed rather naturally as a generic extension.

\begin{lem}\label{lemconstructionmu} For every $U\in\mathcal{C}_{\rm simp}$, the representation $M_U$ of the previous corollary fits into an exact sequence
$$0\rightarrow V_U\rightarrow M_U\rightarrow C_U\rightarrow 0,$$
where ${\rm Ext}^1(V_U,V_U)=0={\rm Ext}^1(C_U,C_U)$, the above exact sequence is generic and thus $M_U=C_U*V_U$, and $V_U$ is isomorphic to a factor of $U^{[U,I]+1}$ by a projective subrepresentation.\end{lem}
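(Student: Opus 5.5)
The plan is to build $V_U$ and $C_U$ explicitly and to check that the generic extension $C_U*V_U$ generates the irreducible set $\mathcal{A}_U$ of Corollary \ref{cormu}. Set $n=[U,I]+1$. Condition $[U,M]\ge n$ means that $U^n$ maps to $M$ through an $n$-dimensional space of homomorphisms. The candidate for $V_U$ is the image of a generic such map $U^n\to M$. This image has the form $U^n/K$, where $K$ is the kernel. Since $U$ is almost projective and the kernel of a map out of $U^n$ is assembled from kernels of the components, I expect Lemma \ref{Lem:Almost projective} (every proper subrepresentation of $U$ is projective) to force $K$ to be projective; I will check this on each summand after a base change in $\mathrm{Hom}(U,M)$. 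I then take $V_U$ to be the generic such quotient, namely the factor of $U^n$ by a generic projective subrepresentation $K$ of the appropriate dimension vector, and set $C_U$ to be the exceptional representation of dimension vector $\mathbf{d}-\bdim V_U$.

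For the rigidity statements, $C_U$ is exceptional by definition. For $V_U$, I would choose $K\subset U^n$ among projective subrepresentations so that the quotient is the generic one. Concretely, the quotient lies in the closed irreducible family of quotients of $U^n$ by projectives of a fixed dimension vector (a quiver Grassmannian of $U^n$ modulo the action), and its generic member should satisfy $\mathrm{Ext}^1(V_U,V_U)=0$. The cleanest route is to show that the generic such quotient is the exceptional representation of its dimension vector. To do this I use the exact sequence $0\to K\to U^n\to V_U\to 0$ together with $\mathrm{Ext}^1(U,U)=0$ for indecomposable $U$ over a Dynkin quiver. Applying $\mathrm{Hom}(V_U,-)$ and $\mathrm{Hom}(-,V_U)$ reduces $\mathrm{Ext}^1(V_U,V_U)$ to the surjectivity of $\mathrm{Hom}(K,V_U)\to\mathrm{Ext}^1(V_U,V_U)$, i.e.\ to the vanishing of its cokernel. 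That vanishing should hold for generic $K$ by a tangent-space and dimension count on the Grassmannian.

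Next I prove the equivalence $[U,M]\ge n \iff C_U*V_U\degg M$. For $\Leftarrow$, we have $[U,V_U]\ge n$ because $U^n\to V_U$ is surjective and $\mathrm{Hom}(U,K)=0$ (since $K$ is projective and $U\in\mathcal{C}$). Hence $[U,C_U*V_U]\ge[U,V_U]\ge n$, and upper semicontinuity of $[U,-]$ passes this to all degenerations. For $\Rightarrow$, given $M$ with $[U,M]\ge n$, the image $V'$ of $U^n\to M$ is a degeneration of $V_U$ by genericity, and $C'=M/V'$ has dimension vector $\bdim C_U$, so $C_U\degg C'$ since $C_U$ is exceptional. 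Reineke's characterization of generic extensions (the proposition quoted from \cite{Re01}) then gives $C_U*V_U\degg M$. Combined with Corollary \ref{cormu}, this shows that the orbit closure of $C_U*V_U$ equals $\mathcal{A}_U$, so $M_U\cong C_U*V_U$, and the defining sequence is generic.

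The main obstacle is the genericity of $V_U$: both its rigidity and the claim that every image $V'$ is a degeneration of $V_U$. My fallback is to define $V_U$ as the generic point of the irreducible set of quotients of $U^n$ by projective subrepresentations of the relevant dimension vector. That set is irreducible as an image of the open part of a family of Grassmannians over the $n$-tuples of maps, and its generic point is automatically minimal for $\degg$. Rigidity is then obtained separately by showing that $V_U$ is a direct sum of copies of $U$ and of $U$ modulo a simple-socle-type projective, using Lemma \ref{lemij} and Corollary \ref{cornew} to control $P_0(U)$ and $P_1(U)$.
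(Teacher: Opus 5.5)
There are two genuine gaps. The first is in your ``$\Rightarrow$'' step. For an arbitrary $M\in\mathcal{A}_U$, the image $V'$ of a map $U^n\to M$ need not have the dimension vector of $V_U$. So $V'$ cannot be a degeneration of $V_U$, and $M/V'$ need not have dimension vector $\bdim C_U$.

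For instance, take $Q=1\to 2\leftarrow 3$ with arrows $\alpha\colon 1\to 2$ and $\beta\colon 3\to 2$, and take $U=I_2=\tau^{-1}S_2\in\mathcal{C}_{\rm simp}$, $P=P_1\oplus P_2\oplus P_3$, $I=I_2$. Then $n=2$ and $\bd=(2,4,2)$. Here $[U,M]\ge 2$ means that $(M_\alpha,M_\beta)\colon M_1\oplus M_3\to M_2$ has rank at most $2$. The generic point of this locus is $M_U=U^2\oplus S_2^2$, so $V_U=U^2$ and $C_U=S_2^2$. But $M=S_1^2\oplus S_2^4\oplus S_3^2$ also lies in $\mathcal{A}_U$, since $[U,M]=4$. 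Because ${\rm Hom}(U,S_2)=0$, every image of a map $U^2\to M$ lies in $S_1^2\oplus S_3^2$, so it never has dimension vector $(2,2,2)$.

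No choice of ``the appropriate dimension vector'' repairs this. The fix is to invoke Corollary~\ref{cormu} at the start rather than at the end. Since $\mathcal{A}_U$ is the orbit closure of $M_U$, it suffices to treat $M=M_U$. That is, choose $f\colon U^n\to M_U$ with linearly independent components and build $V_U$ and $C_U$ from $\bdim{\rm Im}(f)$ and $\bdim{\rm Coker}(f)$. This is exactly what the paper does.

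The second gap is the rigidity of $V_U$, and your reduction there is backwards. Since ${\rm Ext}^1(U^n,V_U)$ is a quotient of ${\rm Ext}^1(U^n,U^n)=0$, the connecting map ${\rm Hom}(K,V_U)\to{\rm Ext}^1(V_U,V_U)$ is always surjective. What you need is that this map is zero. Neither the ``tangent-space count'' nor the structural fallback gives that.

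The missing idea is to reverse the logic. Let $V_U$ be the exceptional representation of dimension vector $\bdim{\rm Im}(f)$, so it is rigid by definition, and then show it is a quotient of $U^n$ by a projective:
\begin{itemize}
\item Set $P'={\rm Ker}(f)$. From $0\to P'\to U^n\to{\rm Im}(f)\to 0$ and $V_U\degg{\rm Im}(f)$, Reineke's criterion for generic extensions gives $V_U*P'\degg U^n$.
\item $U^n$ is rigid, so its orbit is open, which forces $V_U*P'\cong U^n$.
\item Then $[U,V_U]\ge n$, because ${\rm Hom}(U,P')=0$.
\item Let $C_U$ be exceptional of dimension vector $\bdim{\rm Coker}(f)$. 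The same criterion gives $C_U*V_U\degg M_U$, and $[U,C_U*V_U]\ge n$ puts $C_U*V_U$ in $\mathcal{A}_U$. Hence $C_U*V_U\cong M_U$.
\end{itemize}

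A smaller point: the projectivity of ${\rm Ker}(f)$ does not follow from ``kernels of the components''. What works is this. Subrepresentations of $U^n$ have the form $U^k\oplus P'$ with $P'$ projective. Linear independence of the $f_i$, together with ${\rm End}(U)=\mathbb{C}$, gives ${\rm Hom}(U,{\rm Ker}(f))=0$, which forces $k=0$.
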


\begin{proof} We define $D=[U,I]+1$. Since $[U,M_U]>[U,I]$ by the previous corollary, we can find linear independent elements $f_1,\ldots,f_D$ in ${\rm Hom}(U,M_U)$, and we consider the map $$f=[f_1\ldots f_D]:U^D\rightarrow M_U.$$
It induces short exact sequences
\[
0\rightarrow{\rm Ker}(f)\rightarrow U^D\rightarrow {\rm Im}(f)\rightarrow 0\mbox{ and }0\rightarrow {\rm Im}(f)\rightarrow M_U\rightarrow{\rm Coker}(f)\rightarrow 0.
\]
In the induced exact diagram
\[
\xymatrix{
{\rm Hom}(U,{\rm Ker}(f))\ar@{^{(}->}[r]&{\rm Hom}(U,U^D)\ar^{f^*}[rr]\ar[dr]&&{\rm Hom}(U,M_U)\\ 
&&{\rm Hom}(U,{\rm Im}(f))\ar@{^(->}[ur]&&
}
\]
the map $f^*$ is injective by the construction of $f$, thus ${\rm Hom}(U,{\rm Ker}(f))=0$. On the other hand, since $U$ is almost projective, ${\rm Ker}(f)$ has to be of the form $U^k\oplus P'$ for a projective representation $P'$. We thus find ${\rm Ker}(f)=P'$, and also $[U,{\rm Im}(f)]\geq D$. Let $C_U$ be the exceptional representation of the same dimension vector as ${\rm Coker}(f)$, and let $M'$ be the generic extension of $C_U$ by ${\rm Im}(f)$. Since $C_U$ degenerates to ${\rm Coker}(f)$, the representation $M'$ degenerates to $M_U$. We then have
$$[U,M']\geq [U,{\rm Im}(f)]\geq D=[U,I]+1,$$
thus, $M'$ belongs to $\mathcal{A}_U$. By the previous corollary, we thus find that $M_U$ also degenerates to $M'$, proving that $M'$ is isomorphic to $M_U$. We thus have an exact sequence
$$0\rightarrow{\rm Im}(f)\rightarrow M_U\rightarrow C_U\rightarrow 0.$$
Now, let $V_U$ be the exceptional representation of the same dimension vector as ${\rm Im}(f)$, and let $W$ be the generic extension of $V_U$ by ${\rm Ker}(f)=P'$. Since $V_U$ degenerates to ${\rm Im}(f)$, the representation $W$ degenerates to $U^D$, which implies that $W$ is isomorphic to $U^D$ since $U^D$ is exceptional. We thus have an exact sequence
$$0\rightarrow P'\rightarrow U^D\rightarrow V_U\rightarrow 0,$$
and in particular we have $[U,V_U]\geq D$. Now, let $M''$ be the generic extension of $C_U$ by $V_U$. Since $V_U$ degenerates to ${\rm Im}(f)$, the representation $M''$ degenerates to $M_U$. Again, we find
$$[U,M'']\geq [U,V_U]\geq D=[U,I]+1,$$
and the same argument as above proves that $M''$ is isomorphic to $M_U$. This constructs the exact sequence
$$0\rightarrow V_U\rightarrow M_U\rightarrow C_U\rightarrow 0$$
with the claimed properties.
\end{proof}

\section{Two strata}\label{sec:twostrata}

For a representation $V$, denote by ${\rm gr}(V)$ the direct sum of all its Jordan-Hölder factors.

\begin{lem} If $I$ contains $I_{\rm min}$, we have $[U,I]<[U,{\rm gr}(I)]$ for every 
$U\in\mathcal{C}_{\rm simp}$.
\end{lem}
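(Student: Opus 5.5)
The plan is to compute both sides explicitly in terms of the dimension vector of $I$, using the minimal projective resolution $0\rightarrow P_1(U)\rightarrow P_0(U)\rightarrow U\rightarrow 0$ together with the description of $P_0(U)$ and $P_1(U)$ from Lemma \ref{lemij} and Lemma \ref{lemp0u}. As in those proofs, let $Q_0^I$ be the set of vertices $i$ with $[U,S_i]\neq 0$ and $Q_0^J$ the set of vertices $j$ with $[U,S_j]^1\neq 0$. By Lemma \ref{lemij}, Lemma \ref{lemp0u} and Corollary \ref{cornew}, we have $P_0(U)=\bigoplus_{i\in Q_0^I}P_i$ and $P_1(U)=\bigoplus_{j\in Q_0^J}P_j$, each summand appearing with multiplicity one.

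First I compute the right-hand side. Since ${\rm gr}(I)=\bigoplus_k S_k^{(\bdim I)_k}$ and $[U,S_k]$ equals the multiplicity of $P_k$ in $P_0(U)$, we get
\[
[U,{\rm gr}(I)]=\sum_{i\in Q_0^I}(\bdim I)_i .
\]

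Next I compute the left-hand side. Apply ${\rm Hom}(\_,I)$ to the resolution. Because $I$ is injective, ${\rm Ext}^1(U,I)=0$, so the sequence
\[
0\rightarrow{\rm Hom}(U,I)\rightarrow{\rm Hom}(P_0(U),I)\rightarrow{\rm Hom}(P_1(U),I)\rightarrow 0
\]
is exact. Using $[P_k,I]=(\bdim I)_k$, this gives
\[
[U,I]=\sum_{i\in Q_0^I}(\bdim I)_i-\sum_{j\in Q_0^J}(\bdim I)_j .
\]

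Combining the two formulas, the claim reduces to showing $\sum_{j\in Q_0^J}(\bdim I)_j>0$. The set $Q_0^J$ is non-empty because $U$ is not projective, so $P_1(U)\neq 0$. It therefore suffices that $(\bdim I)_j>0$ for every vertex $j$. This is where the hypothesis is used: $I$ contains $I_{\rm min}$, and $\bdim I_{\rm min}$ has support all of $Q$, since every vertex admits a path to some sink. Hence every vertex $j$ has $(\bdim I)_j\geq 1$, and the strict inequality follows. I do not expect any real obstacle; the only points needing care are that the multiplicities in $P_0(U)$ are exactly one, and the exactness on the right, which comes from injectivity of $I$.
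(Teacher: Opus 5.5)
Your proof is correct and is essentially the paper's argument made explicit. The difference $\sum_{j\in Q_0^J}(\bdim I)_j$ that you compute is exactly $[U,{\rm gr}(I)]^1$; the paper obtains it from the fact that $[U,-]-[U,-]^1$ takes the same value on $I$ and ${\rm gr}(I)$, and shows it is nonzero because ${\rm gr}(I)$ contains every simple while $U$ is not projective. (The multiplicity-one facts from Lemma \ref{lemij} and Corollary \ref{cornew} are not needed: the same computation with the multiplicities $[U,S_k]$ and $[U,S_k]^1$ works for any non-projective $U$.)
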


\begin{proof} We have
$$[U,I]=[U,I]-[U,I]^1=[U,{\rm gr}(I)]-[U,{\rm gr}(I)]^1,$$
and thus $[U,I]=[U,{\rm gr}(I)]$ if and only if $[U,{\rm gr}(I)]^1=0$. By assumption, 
${\rm gr}(I)$ contains all simples as direct summands, thus $[U,{\rm gr}(I)]^1=0$ implies 
that $U$ is projective, a contradiction.
\end{proof}

\begin{cor}\label{cors1} Assume that $I$ contains $I_{\rm min}$ as a direct summand. 
Then for every $U\in\mathcal{C}_{\rm simp}$ the representation $M_U$ admits $P$ as a subrepresentation. 
The stratum $\mathcal{S}_{[P]}$ has dimension $[P,I]$.\end{cor}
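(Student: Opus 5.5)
Our plan is to produce a degeneration $M_U\degg P\oplus{\rm gr}(I)$ and then transport the embedding of $P$ along it using Proposition~\ref{constremb}. The hypothesis that $P$ is projective gives $[P,M_U]=[P,P\oplus{\rm gr}(I)]$ automatically, because both sides depend only on dimension vectors. So Proposition~\ref{constremb} applies with $U=P$, once the degeneration is in place. Since $P$ is obviously a subrepresentation of $P\oplus{\rm gr}(I)$, it then embeds into $M_U$.

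To obtain the degeneration we use Corollary~\ref{cormu}. The representation $N=P\oplus{\rm gr}(I)$ has dimension vector $\bd$, and
\[
[U,N]=[U,P]+[U,{\rm gr}(I)]=[U,{\rm gr}(I)]>[U,I],
\]
where $[U,P]=0$ because $U\in\mathcal{C}$ and $P$ is projective, and the strict inequality is the previous lemma. Hence $N\in\mathcal{A}_U$, and Corollary~\ref{cormu} gives $M_U\degg N$. This proves the first assertion.

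For the dimension statement, note first that $\mathcal{S}_{[P]}$, taken inside $\Gr_{\bdim P}(M_U)$, is nonempty by the first part. Its dimension is
\[
\dim\mathcal{S}_{[P]}=[P,M_U]-[P,P].
\]
Since $P$ is projective, $[P,-]$ is additive on short exact sequences, so $[P,M_U]=[P,P\oplus I]=[P,P]+[P,I]$. Hence $\dim\mathcal{S}_{[P]}=[P,I]$. This is the same computation as for any $M$ of dimension vector $\bd$.

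The main step to check is the strict inequality $[U,{\rm gr}(I)]>[U,I]$. This is exactly where the hypothesis $I_{\rm min}\subseteq I$ enters, and it is supplied by the preceding lemma.
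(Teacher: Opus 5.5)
Your proposal is correct and matches the paper's proof step for step. You degenerate $M_U$ to $P\oplus{\rm gr}(I)$ via Corollary~\ref{cormu} and the preceding lemma. You then transfer the embedding of $P$ with Proposition~\ref{constremb}, which applies because $[P,-]$ depends only on dimension vectors. Finally you read off $\dim\mathcal{S}_{[P]}=[P,M_U]-[P,P]=[P,I]$.
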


\begin{proof} 
We have $[U,P\oplus {\rm gr}(I)]=[U,{\rm gr}(I)]>[U,I]$, and thus $M_U$ 
degenerates to $P\oplus{\rm gr}(I)$ by Corollary \ref{cormu}. Since $[P,M_U]=[P,P\oplus {\rm gr}(I)]$ 
and $P$ embeds into $P\oplus {\rm gr}(I)$, it also embeds into $M_U$.  The previous equality also shows that
$$\dim\mathcal{S}_{[P]}=[P,M_U]-[P,P]=[P,I].$$
\end{proof}

The following lemma is trivial if $P$  contains $P_0(U)$ as a direct summand, since we can then choose $P'(U)=P_0(U)$. So, it is only relevant for small $P$.

\begin{lem} Assume that $P$ contains $P_{\rm min}$ as a direct summand. Then, for every $U\in\mathcal{C}_{\rm simp}$, there exists a direct summand $P'(U)$ of $P$ containing $P_0(U)$ as a subrepresentation, 
such that $${\rm Ext}^1(P'(U)/P_0(U),P)=0,$$
and $P'(U)/P_1(U)$ is indecomposable.
\end{lem}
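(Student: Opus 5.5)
The plan is to build $P'(U)$ directly from the description of $P_0(U)$ and $P_1(U)$ in Lemma~\ref{lemp0u}, Lemma~\ref{lemij} and Corollary~\ref{cornew}. There we have $P_0(U)=\bigoplus_{i\in Q_0^I}P_i$ and $P_1(U)=\bigoplus_{j\in Q_0^J}P_j$, with an arrow from every $i\in Q_0^I$ to every $j\in Q_0^J$. For each $i\in Q_0^I$ let $K_i$ be the set of vertices $k$ such that $P_k$ is a direct summand of $P$ and there is a path $k\to i$. Since $P_{\rm min}\subseteq P$, every source of $Q$ with a path to $i$ lies in $K_i$, so $K_i\neq\emptyset$. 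I would set
$$P'(U)=\bigoplus_{k\in K}P_k,\qquad K=\bigcup_{i\in Q_0^I}K_i,$$
with each $P_k$ taken once, as a summand of $P$. I would take only the multiplicity needed; whether the surjectivity below needs all copies is a point to recheck.

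\emph{Step 1 (embedding).} For each $i$, map $P_i\to\bigoplus_{k\in K_i}P_k$ diagonally via the unique paths $k\to i$; this is injective since each component is. I then need the images for distinct $i,i'\in Q_0^I$ to be independent. This should follow from the tree argument of Lemma~\ref{lemp0u}: a common vertex $j\in Q_0^J$ with arrows $i\to j\leftarrow i'$ forbids any vertex from having paths to both $i$ and $i'$. Hence $K_i\cap K_{i'}=\emptyset$, and moreover the supports of $P_i$ and $P_{i'}$ inside a common $P_k$ cannot overlap. If $|Q_0^I|=1$ there is nothing to check. This gives $P_0(U)\hookrightarrow P'(U)$, and therefore also $P_1(U)\subset P_0(U)\subset P'(U)$.

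\emph{Step 2 (Ext vanishing).} Since $P'(U)/P_0(U)$ has projective resolution $0\to P_0(U)\to P'(U)\to P'(U)/P_0(U)\to 0$, the space ${\rm Ext}^1(P'(U)/P_0(U),P)$ is the cokernel of
$${\rm Hom}(P'(U),P)\to{\rm Hom}(P_0(U),P)=\bigoplus_{i\in Q_0^I}P_i(\text{at }i)\text{-component of }P,$$
i.e. of the map $\bigoplus_{k\in K}P_k$-evaluations $\to\bigoplus_i P$ at vertex $i$, given by the path maps of $P$. For each $i$, $P$ at vertex $i$ has basis the paths $k'\to i$, where $P_{k'}$ runs over the summands of $P$ with multiplicity. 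Every such $k'$ lies in $K_i$ by definition, so each basis path is the image of the trivial path at $k'$ in ${\rm Hom}(P_{k'},P)$. Surjectivity then holds componentwise, and disjointness of the $K_i$ makes it hold jointly. This is the reason for taking all of $K_i$ rather than a single source: a single source $s$ fails whenever some summand $P_{k'}$ of $P$ reaches $i$ avoiding $s$.

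\emph{Step 3 (indecomposability), the main obstacle.} Each $P_k$ with $k\in K$ and $P_1(U)$ are thin, and the quotient $P'(U)/P_1(U)$ is glued from the $P_k$ along the diagonally embedded $P_i$ modulo the $P_j$. I would show it is indecomposable by exhibiting a connected "support graph" and checking that its endomorphism ring is local. First, the images of $P_i$ are diagonal in $\bigoplus_{k\in K_i}P_k$, and they remain nonzero modulo $P_1(U)$ because the simple top at $i$ survives. This ties all $P_k$, $k\in K_i$, together. Second, the arrows $i\to j$ to a common $j\in Q_0^J$, with $P_j$ quotiented out, tie together the different $i\in Q_0^I$, using that $P_j$ is embedded in the sum of the $P_i$ via a single (diagonal) copy by Lemma~\ref{lemij}. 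Concretely, I would compute ${\rm Hom}(P'(U)/P_1(U),-)$ against the simple tops, or argue that any idempotent must be constant on the tops $S_k$, $k\in K$. I expect the delicate case to be when some $K_i$ contains several incomparable vertices together with $|Q_0^I|>1$; there I would fall back on a case check using $|Q_0^I|\cdot|Q_0^J|\le 3$ from Corollary~\ref{cornew}.
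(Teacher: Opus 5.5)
There is a genuine gap, and it lies in your definition of $K_i$, not only in the sketchy Step 3. You let $K_i$ consist of \emph{all} vertices $k\le i$ such that $P_k$ is a summand of $P$. With this choice the quotient $P'(U)/P_1(U)$ is in general decomposable, so Step 3 cannot be carried out.

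Here is a counterexample. Take $Q\colon 1\to 2\to 3$, $P=P_1\oplus P_2\oplus P_3\supseteq P_{\rm min}=P_1$, and $U=S_2\in\mathcal{C}_{\rm simp}$, with resolution $0\to P_3\to P_2\to S_2\to 0$. Then $K_2=\{1,2\}$ and your $P'(U)=P_1\oplus P_2$. The quotient $P'(U)/P_1(U)$ has dimension vector $(1,2,1)$, which is not the dimension vector of any indecomposable of $A_3$.

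The mechanism is general. If $k'<k$ both lie in $K_i$, the embedding $\phi\colon P_k\hookrightarrow P_{k'}$ gives the automorphism $(a,b)\mapsto(a-\phi(b),b)$ of $P_{k'}\oplus P_k$. This automorphism moves the diagonal copy of $P_i$ into $0\oplus P_k$, so $P_{k'}$ splits off from both $P'(U)/P_0(U)$ and $P'(U)/P_1(U)$. Your proposed idempotent argument (``constant on the tops $S_k$'') misses exactly these off-diagonal automorphisms. Your construction even spoils the trivial case where $P_0(U)$ is itself a summand of $P$. The remark at the end of your Step 2, that all of $K_i$ is needed, is therefore a misdiagnosis.

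The fix, which is what the paper does, is to keep only the $\le$-maximal elements of $K_i$, i.e.\ those closest to $i$. Your Steps 1 and 2 still go through.
\begin{itemize}
\item For Step 2, let $P_{k'}$ be a summand of $P$ with $k'\le i$, and pick a maximal $k\in K_i$ with $k'\le k$. By uniqueness of paths in a tree, the basis element of $P$ at $i$ given by the path $k'\to i$ is the image, under the path $k\to i$, of the element at $k$ given by the path $k'\to k$. Hence it lies in the image.
\item The maximal elements are pairwise incomparable, so there are no nonzero maps between distinct $P_k$. Thus the diagonal copy of $P_i$ cannot be untangled, and $R=P'(U)/P_0(U)$ has no projective summand, i.e.\ $[R,P'(U)]=0$.
\end{itemize}

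Even with this fix, Step 3 needs an actual argument rather than a ``connected support graph'' heuristic plus a case check. The paper uses the exact sequence $0\to U\to V\to R\to 0$, with $V=P'(U)/P_1(U)$, coming from the $3\times 3$ diagram.
\begin{itemize}
\item $[R,V]^1=0$ follows from $[R,P'(U)]^1=0$, since the path algebra is hereditary.
\item $[R,V]=[R,P'(U)]=0$ follows from a dimension count using $[P'(U),P_1(U)]=0=[P_0(U),P_1(U)]$.
\item $[U,R]=0$ reduces to a Cartesian square of path spaces.
\end{itemize}
Applying ${\rm Hom}(-,V)$ and ${\rm Hom}(U,-)$ then gives $[V,V]=[U,V]=[U,U]=1$.
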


\begin{proof} 
By Lemma \ref{lemp0u}, $P_0(U)$ is the direct sum of all $P_i$ for $i$ in $Q_0^I$, the set of sources in the support of ${\rm\bf dim}(U)$. We consider the set of vertices $Q_0$ with its partial order defined by the arrows (that is, $i\leq j$ if there exists a path from $i$ to $j$). Let $Q_0^i$ be the set of vertices admitting a path to $i$ for $i\in I$. We claim that there are no paths between any two different $Q_0^i$. Namely, again by Lemma \ref{lemij}, we know that for any two vertices $i,i'\in Q_0^I$ we find a vertex $j$ and arrows $i\rightarrow j\leftarrow i'$, a contradiction to $Q$ being a tree. Using this information, we can define a specific $P'$ as above. Namely, for every $i\in Q_0^I$, let $Q_0^{K,i}$ be the set of $\leq$-maximal vertices $k(i)\in Q_0^i$ such that $P_{k(i)}$ is a direct summand of $P$. It is nonempty by assumption on $P$. Then we define $P'(U)$ as the direct sum of all $P_k$ for $k$ in $Q_0^K=\bigcup_{i\in I}Q_0^{K,i}$. Since $P_t$ embeds into $P_u$ if $u\leq t$, we see that $P_0(U)$ embeds into $P'(U)$. By definition, $P'(U)$ is a direct summand of $P$. Now we define $R=P'(U)/P_0(U)$, and we claim that $[R,P]^1=0$. Namely, assume that $[R,P_l]^1\not=0$ and that $P_l$ is a direct summand of $P$. From the exact sequence
$$0\rightarrow P_0(U)\rightarrow P'(U)\rightarrow R\rightarrow 0$$
we get an exact sequence
$$
0={\rm Hom}(R,P_l)\rightarrow{\rm Hom}(P'(U),P_l)\rightarrow{\rm Hom}(P_0(U),P_l)\rightarrow{\rm Ext}^1(R,P_l)\not=0.
$$
Thus $[P_0(U),P_l]>[P'(U),P_l]$ which, by the above explicit description of $P_0(U)$ and $P'$, 
means that the number of paths from $l$ to a vertex in $Q_0^I$ is strictly bigger than the number 
of paths from $l$ to $Q_0^K$. Since there are no paths between the sets $Q_0^i$ for $i\in Q_0^I$, 
this implies that, for some $i\in Q_0^I$, there exists a path from $l$ to $i$, but no path from $l$ 
to any vertex in $Q_0^{K,i}$. But since $P_l$ is a direct summand of $P$, the definition of $Q_0^{K,i}$ implies that $l\leq k$ for some vertex in $Q_0^{K,i}$, a contradiction.

Now we consider the following commutative diagram with exact rows and columns:

$$\begin{array}{ccccccc}
&0&&0&&&\\ &\downarrow&&\downarrow&&&\\ &P_1(U)&=&P_1(U)&&&\\
&\downarrow&&\downarrow&&&\\
0\rightarrow&P_0(U)&\rightarrow&P'(U)&\rightarrow&R&\rightarrow 0\\
&\downarrow&&\downarrow&&||&\\
0\rightarrow&U&\rightarrow&V&\rightarrow&R&\rightarrow 0\\
&\downarrow&&\downarrow&&&\\ &0&&0&&&\end{array}$$
Our aim is to prove indecomposability of $V$. Since $[R,P]^1=0$ and $P'(U)$ is a direct summand of $P$, we have $[R,P'(U)]^1=0$, and thus $[R,V]^1=0$ by the right vertical exact sequence. We claim that we also have $[R,V]=0$. Recall from the previous lemma and Lemma \ref{lemij} that 
$$P'(U)=\bigoplus_{k\in Q_0^K}P_k,\; P_0(U)=\bigoplus_{i\in Q_0^I}P_i,\; P_1(U)=\bigoplus_{j\in Q_0^J}P_j,$$
where there are only paths from $Q_0^K$ to $Q_0^I$ and from $Q_0^I$ to $Q_0^J$. This implies that 
$$[P'(U),P_1(U)]=0=[P_0(U),P_1(U)].$$
We then find
\begin{eqnarray*}
[R,V]&=&[P'(U),V]-[P_0(U),V]=\\
&=&[P'(U),P'(U)]-[P'(U),P_1(U)]-[P_0(U),P_1(U)]+[P_0(U),P_1(U)]=\\
&=&[P'(U),P'(U)]-[P_0(U),P_1(U)]=\\
&=&[R,P'(U)]=0.\end{eqnarray*}
The last equality follows since $R$ does not admit a non-zero projective summand: such a summand would induce a direct summand of $P'(U)$ which does not intersect $P_0(U)\subset P'(U)$, in contradiction to the construction of $P'(U)$.
Now we claim that we have $[U,R]=0$. Since $U\in\mathcal{C}$, we have a commuting square of embeddings
$$\begin{array}{ccc}
{\rm Hom}(P_0(U),P_0(U))&\rightarrow&{\rm Hom}(P_0(U),P'(U))\\
\downarrow&&\downarrow\\
{\rm Hom}(P_1(U),P_0(U))&\rightarrow&{\rm Hom}(P_1(U),P'(U))\end{array}$$
and the claim is equivalent to this diagram being Cartesian. Using the explicit form of the three projectives, we can make all four spaces of maps explicit as direct sums of spaces of paths
$$\begin{array}{ccc}
\bigoplus_{i\in Q_0^I} \langle i\leadsto i\rangle&\rightarrow&\bigoplus_{i\in Q_0^I} \langle k\leadsto i\, |\, k\in Q_0^{K,i}\rangle\\
\downarrow&&\downarrow\\
\bigoplus_{i\in Q_0^I}\langle i\leadsto j\, |\, j\in Q_0^J\rangle&\rightarrow&\bigoplus_{i\in Q_0^I}\langle k\leadsto i\mapsto j\, |\, k\in Q_0^{K,i}, j\in Q_0^J\rangle\end{array}$$
Each direct summand corresponding to a vertex $i\in Q_0^I$ constitutes a square of the form
$$\begin{array}{ccc}\mathbb{C}&\rightarrow&Y\\ \downarrow&&\downarrow\\ X&\rightarrow&X\otimes Y\end{array}$$
where the maps are induced by fixed non-zero choices of elements $x\in X$, $y\in Y$. Such a diagram is evidently cartesian, proving the claim $[U,R]=0$. Now from the induced exact sequences
$$0={\rm Hom}(R,V)\rightarrow{\rm Hom}(V,V)\rightarrow{\rm Hom}(U,V)\rightarrow{\rm Ext}^1(R,V)=0$$
and
$$0\rightarrow{\rm Hom}(U,U)\rightarrow{\rm Hom}(U,V)\rightarrow{\rm Hom}(U,R)=0$$
we conclude
$$[V,V]=[U,V]=[U,U]=1$$
since $U$ is indecomposable, thus, $V$ is indecomposable.
\end{proof}

\begin{lem}\label{lems2} Assuming that $P$ contains $P_{\rm min}$ as a direct summand, write $P=P''\oplus P'(U)$ as provided by the previous lemma. Define 
\begin{equation}\label{eq:NU}
N_U=P''\oplus P_1(U)\oplus P'(U)/P_1(U).    
\end{equation}
Then $N_U$ embeds into $M_U$, and the stratum $\mathcal{S}_{[N_U]}$ has dimension $[P,I]$.
\end{lem}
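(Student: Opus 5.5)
The plan is to apply Proposition~\ref{constremb} to a suitable degeneration of $M_U$ into which $N_U$ obviously embeds, and then to read off the stratum dimension from the same computation.

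\emph{Setup.} Put $V=P'(U)/P_1(U)$. By the previous lemma, $V$ is indecomposable and fits into $0\to U\to V\to R\to 0$. Its proof also gives $[U,V]=[U,U]=1$. Since $Q$ is Dynkin, $V$ is exceptional, so $[V,V]^1=0$. We have $\bdim(N_U)=\bdim(P'')+\bdim(P_1(U))+\bdim(P'(U))-\bdim(P_1(U))=\bdim(P)$. Hence $X:=N_U\oplus I$ has dimension vector $\bd$.

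\emph{Step 1: $M_U$ degenerates to $X$.} Since $U\in\mathcal{C}$, we have $[U,P''\oplus P_1(U)]=0$. Therefore
\[
[U,X]=[U,V]+[U,I]=1+[U,I]>[U,I],
\]
and Corollary~\ref{cormu} gives $M_U\degg X$. Clearly $N_U$ embeds into $X$.

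\emph{Step 2: the key equality $[N_U,M_U]=[N_U,X]$.} For the projective summands $P''\oplus P_1(U)$ the Hom-dimension depends only on the dimension vector, so equality holds there. It therefore remains to show $[V,M_U]=[V,X]$. Upper semicontinuity already gives $\le$. Since $\langle \bdim V,\bd\rangle$ is fixed, the claim is equivalent to
\[
[V,M_U]^1\ge [V,X]^1=[V,P''\oplus P_1(U)]^1,
\]
using $[V,V]^1=0$ and $[V,I]^1=0$. I would attack this through the projective resolution $0\to P_1(U)\to P'(U)\to V\to 0$. It gives
\[
[V,M]=[P'(U),M]-\operatorname{rk}\bigl({\rm Hom}(P'(U),M)\to{\rm Hom}(P_1(U),M)\bigr),
\]
and this map factors as restriction to $P_0(U)\subset P'(U)$ followed by the linear map $\Phi_M$ from the irreducibility lemma for $\mathcal{A}_U$. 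The aim is to bound this rank for $M=M_U$ by the corresponding rank for $X$. For $X$ the rank can be computed explicitly from the path description of $P_0(U)$, $P'(U)$, $P_1(U)$, as in the proof of the previous lemma. For $M_U$ I would use the defining property $[U,M_U]=[U,I]+1$ (rank of $\Phi_{M_U}$ dropped by one relative to $P\oplus I$), together with surjectivity of ${\rm Hom}(P'(U),M)\to{\rm Hom}(P_0(U),M)$ on the relevant paths. This surjectivity should follow from ${\rm Ext}^1(R,P)=0$ and the tree structure separating the sets $Q_0^i$.

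\emph{Step 3 (likely the main obstacle): the rank comparison in Step 2.} If the direct rank comparison fails, the fallback is a different route. I would use the description $M_U=C_U*V_U$ of Lemma~\ref{lemconstructionmu} and lift the embedding $U\hookrightarrow V_U\hookrightarrow M_U$ along $U\subset V$. The obstruction to lifting lies in ${\rm Ext}^1(R,M_U)$, and I would try to show it vanishes using ${\rm Ext}^1(R,P)=0$ and $[R,I]^1=0$.

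\emph{Step 4: dimension of the stratum.} Once $[N_U,M_U]=[N_U,X]$ is established, Proposition~\ref{constremb} gives the embedding $N_U\hookrightarrow M_U$. For the dimension,
\[
\dim\mathcal{S}_{[N_U]}=[N_U,M_U]-[N_U,N_U]=[N_U,N_U\oplus I]-[N_U,N_U]=[N_U,I]=[P,I],
\]
where the last equality holds because $I$ is injective, so $[\,\cdot\,,I]$ depends only on the dimension vector, and $\bdim N_U=\bdim P$.
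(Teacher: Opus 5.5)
\textbf{There is a genuine gap at the crucial step.} Your Steps 1 and 4 agree with the paper's proof. So does your reduction of the embedding to the single inequality $[V,M_U]\ge[V,X]=[V,I]+1$. But that inequality is the heart of the lemma, and your proposal does not prove it.

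Step 2 only announces a rank comparison and never carries it out. It also relies on a surjectivity of ${\rm Hom}(P'(U),M)\to{\rm Hom}(P_0(U),M)$ that you leave unjustified. The cokernel of that map is ${\rm Ext}^1(R,M)$, and ${\rm Ext}^1(R,P)=0$ together with the tree structure does not control this group by itself. The Step 3 fallback aims at lifting a single morphism $U\hookrightarrow M_U$ to $V\to M_U$. That yields neither an embedding of $N_U$ nor the Hom-dimension count that Proposition~\ref{constremb} needs.

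The missing observation is that no exact rank computation, surjectivity or Ext-vanishing is required; a one-sided bound suffices. In your own notation, restriction ${\rm Hom}(P'(U),M)\to{\rm Hom}(P_1(U),M)$ factors as
\[
{\rm Hom}(P'(U),M)\to{\rm Hom}(P_0(U),M)\xrightarrow{\Phi_M}{\rm Hom}(P_1(U),M).
\]
Its rank is therefore at most ${\rm rk}\,\Phi_M=[P_0(U),M]-[U,M]$, which gives
\[
[V,M]\ \ge\ [P'(U),M]-[P_0(U),M]+[U,M]=\langle\bdim R,\bd\rangle+[U,M].
\]
Now $\langle\bdim R,\bd\rangle=[R,P]-[R,P]^1+[R,I]-[R,I]^1=[R,I]=[V,I]-[U,I]$. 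This uses three facts: $R$ has no projective summand, so $[R,P]=0$; ${\rm Ext}^1(R,P)=0$ by the previous lemma; and $I$ is injective. For $M=M_U$ we get $[V,M_U]\ge[V,I]+[U,M_U]-[U,I]\ge[V,I]+1$, which is exactly what you need.

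This is the paper's argument. The paper phrases it by applying ${\rm Hom}(-,M_U)$ to $0\to U\to V\to R\to 0$ and using only the exact sequence up to ${\rm Ext}^1(R,M_U)$, which gives $[V,M_U]\ge[R,M_U]-[R,M_U]^1+[U,M_U]$. Here $[R,M_U]-[R,M_U]^1$ depends only on $\bd$. Incidentally, ${\rm Ext}^1(R,M_U)=0$ is true, by semicontinuity from $M_U\degg N_U\oplus I$ and $[R,N_U\oplus I]^1=0$. So your fallback could be rescued, but only by turning it into this same dimension count, and the vanishing is not needed.
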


\begin{proof} Consider again the commutative diagram with exact rows and columns:

$$\begin{array}{ccccccc}
&0&&0&&&\\ &\downarrow&&\downarrow&&&\\ &P_1(U)&=&P_1(U)&&&\\
&\downarrow&&\downarrow&&&\\
0\rightarrow&P_0(U)&\rightarrow&P'(U)&\rightarrow&R&\rightarrow 0\\
&\downarrow&&\downarrow&&||&\\
0\rightarrow&U&\rightarrow&V&\rightarrow&R&\rightarrow 0\\
&\downarrow&&\downarrow&&&\\ &0&&0&&&\end{array}$$

With this notation, we have $N_U=P''\oplus P_1(U)\oplus V$. From the previous lemma, we know that $$[R,P]^1=[R,V]=[R,V]^1=0$$ and that $V$ is indecomposable.  We then have
$$[U,N_U\oplus I]=[U,V]+[U,I]>[U,I],$$
thus, $M_U$ degenerates to $N_U\oplus I$ by Corollary \ref{cormu}. 
Applying ${\rm Hom}(\_,M_U)$ to the lower horizontal sequence, we find an exact sequence
$$0\rightarrow{\rm Hom}(R,M_U)\rightarrow {\rm Hom}(V,M_U)\rightarrow{\rm Hom}(U,M_U)\rightarrow {\rm Ext}^1(R,M_U).$$
This implies

\begin{eqnarray*}[V,M_U]&\geq& [R,M_U]-[R,M_U]^1+[U,M_U]\\
&=&[R,N_U\oplus I]-[R,N_U\oplus I]^1+[U,M_U]=\\
&=&[R,I]+[U,M_U]=\\
&=&[V,I]-[U,I]+[U,M_U]=\\
&>&[V,I].
\end{eqnarray*}

From this we conclude
\begin{eqnarray*}[N_U,M_U]-[N_U,N_U\oplus I]
&=&[P''\oplus P_1(U),M_U]-[P''\oplus P_1(U),N_U\oplus I]+\\
&&+[V,M_U]-[V,N_U\oplus I]=\\
&=&[V,M_U]-[V,V]-[V,I]\\
&\geq&1-[V,V]=0,\end{eqnarray*}
thus, $$[N_U,M_U]=[N_U,N_U\oplus I]$$ since $M_U$ degenerates to $N_U\oplus I$. 
The embedding of $N_U$ into $N_U\oplus I$ thus induces an embedding $N_U\subset M_U$, and we also conclude
$$[N_U,M_U]-[N_U,N_U]=[N_U,I]=[P,I].$$
\end{proof}

Here is the main result of our paper.

\begin{thm} If $P$ (resp.~$I$) contains $P_{\rm min}$ (resp.~$I_{\rm min})$ as a direct summand, then ${\rm Gr}_{\bf e}(M)$ is irreducible of dimension $[P,I]$ if and only if $M$ degenerates to $P\oplus I$.
\end{thm}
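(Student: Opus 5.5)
The plan is to prove the two implications separately, using the boundary representations $M_U$ from Section~\ref{sec:boundrep} for the hard direction and semicontinuity of fibre dimension for the easy one.

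\emph{The ``if'' direction.} Suppose $M$ degenerates to $P\oplus I$. The universal quiver Grassmannian $\Gr_\be^Q(\bd)\to R_\bd(Q)$ is projective, with fibres $\Gr_\be(M)$. By upper semicontinuity of fibre dimension, $\dim\Gr_\be(M)\le\dim\Gr_\be(P\oplus I)=[P,I]$, since $P\oplus I$ lies in the closure of the orbit of $M$. Conversely, every irreducible component of every nonempty fibre has dimension at least $\langle\be,\bd-\be\rangle=[P,I]$. This is because $\Gr_\be(M)$ is locally cut out, inside a product of Grassmannians of dimension $\sum_i e_i(d_i-e_i)$, by $\sum_{\alpha}e_{s(\alpha)}(d_{t(\alpha)}-e_{t(\alpha)})$ equations. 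Hence all components of $\Gr_\be(M)$ have dimension exactly $[P,I]$.

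For irreducibility, consider the open subset $\mathcal{U}_{P,I}$. Over it, the fibres are equidimensional of dimension $[P,I]$ and nonempty: $P$ embeds into $M$ by Proposition~\ref{constremb}, since $[P,M]=[P,P\oplus I]$ for projective $P$. The preimage of $\mathcal{U}_{P,I}$ is therefore a local complete intersection of the expected dimension. To conclude that each fibre is irreducible, I would use the stratification by isomorphism types. The stratum $\mathcal{S}_{[P]}\subset\Gr_\be(M)$ has dimension $[P,M]-[P,P]=[P,I]$. I would show that every other stratum $\mathcal{S}_{[N]}$ has dimension strictly less than $[P,I]$. For $M=P\oplus I$ this is known from \cite{CFR12}. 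For general $M\in\mathcal{U}_{P,I}$ it should follow from $[N,M]\le[N,P\oplus I]$ (Bongartz), which gives $\dim\mathcal{S}_{[N]}(M)\le\dim\mathcal{S}_{[N]}(P\oplus I)<[P,I]$. Then $\overline{\mathcal{S}_{[P]}}$ is the unique component. Alternatively, one could cite the irreducibility argument of \cite{CFR12} applied verbatim.

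\emph{The ``only if'' direction.} Suppose $M$ does not degenerate to $P\oplus I$. By the first lemma of Section~\ref{sec:boundrep}, $M\in\mathcal{A}_U$ for some $U\in\mathcal{C}_{\rm simp}$, and by Corollary~\ref{cormu} $M_U$ degenerates to $M$. The key step is to show that $\Gr_\be(M_U)$ is not irreducible of dimension $[P,I]$. By Corollary~\ref{cors1} and Lemma~\ref{lems2}, the strata $\mathcal{S}_{[P]}$ and $\mathcal{S}_{[N_U]}$ in $\Gr_\be(M_U)$ are nonempty of dimension $[P,I]$. Moreover, $N_U\not\cong P$ because $N_U$ contains the non-projective summand $V$. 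If $\Gr_\be(M_U)$ were irreducible of dimension $[P,I]$, it would contain two disjoint locally closed subsets of full dimension, which is impossible. So either it is reducible, or its dimension exceeds $[P,I]$.

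To pass from $M_U$ to its degeneration $M$, I would use semicontinuity again. If $\dim\Gr_\be(M)>[P,I]$ we are done. Otherwise, $\dim\Gr_\be(M_U)\le\dim\Gr_\be(M)=[P,I]$, and we need to carry the two components over to $M$. Here I would argue directly in $M$: $[P,M]\ge[P,M_U]$ and $[N_U,M]\ge[N_U,M_U]$, so the subrepresentations $P$ and $N_U$ embed into $M$ by Proposition~\ref{constremb}. The inequality $\dim\mathcal{S}_{[N]}(M)=[N,M]-[N,N]\ge[P,I]$ holds for both $N=P$ and $N=N_U$. This again yields two disjoint strata of dimension at least $[P,I]$ in $\Gr_\be(M)$, contradicting irreducibility of dimension $[P,I]$.

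The main obstacle is the strict inequality on the remaining strata in the ``if'' direction. I expect this to require a careful comparison $[N,M]-[N,N]<[P,I]$ for non-projective $N$ of dimension vector $\be$ with $N\subset M$. I would try to derive it from the corresponding statement for $P\oplus I$ via Bongartz's criterion, or else invoke \cite{CFR12}.
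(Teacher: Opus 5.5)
The reduction of the ``only if'' direction to $M_U$, and the two-strata argument for ${\rm Gr}_{\be}(M_U)$ itself, match the paper. The passage from $M_U$ to a degeneration $M$ with $\dim\Gr_{\be}(M)=[P,I]$ is where the proof breaks.

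Proposition~\ref{constremb} lifts embeddings from the \emph{more} degenerate representation to the less degenerate one: from $M\degg N$, $U\hookrightarrow N$ and $[U,M]=[U,N]$ it concludes $U\hookrightarrow M$. You apply it in the opposite direction, from $M_U$ to its degeneration $M$. You also apply it with an inequality $[N_U,M]\ge[N_U,M_U]$ that may be strict, whereas the proposition needs equality. Embeddings do not pass to degenerations in general. For instance, $P$ embeds into $M_U$ but not into the semisimple degeneration ${\rm gr}(P\oplus I)$ of $M_U$. So nothing you wrote shows that $\mathcal{S}_{[P]}(M)$ or $\mathcal{S}_{[N_U]}(M)$ is nonempty.

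What is missing is an argument about the whole family. Suppose $\dim\Gr_{\be}(M)=[P,I]$. Then the $G_{\bd}$-stable open set where the fibre has the expected dimension contains $M$, hence also $M_U$, and the universal quiver Grassmannian is flat over it. You then need semicontinuity of the number of irreducible components of maximal dimension in a flat family; the paper cites \cite[Theorem 2]{CFFFR17} for this. That result carries the two components $\overline{\mathcal{S}_{[P]}}$ and $\overline{\mathcal{S}_{[N_U]}}$ of $\Gr_{\be}(M_U)$ over to $\Gr_{\be}(M)$. Simply taking closures of the two families over $\overline{G_{\bd}M_U}$ does give two closed subsets of $\Gr_{\be}(M)$ of dimension $[P,I]$, but a priori they might coincide.

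Your ``if'' direction, by contrast, is sound and more hands-on than the paper's. The paper just uses openness of the flat irreducible property in the universal quiver Grassmannian, together with the case $M=P\oplus I$ from \cite{CFR12}. The obstacle you flag is easy to clear:
\begin{itemize}
\item Write a subrepresentation of dimension vector $\be$ as $N=N_P\oplus N'$ with $N'\in\mathcal{C}$.
\item The computation in the proof of Lemma~\ref{lemflatlocus} gives $[N,M]-[N,N]=[P,I]-[N',I]+[N',M]-[N',N']$.
\item Bongartz gives $[N',M]\le[N',P\oplus I]=[N',I]$, since $[N',P]=0$.
\item Hence $\dim\mathcal{S}_{[N]}\le[P,I]-[N',N']<[P,I]$ whenever $N'\neq 0$.
\end{itemize}
Read this bound numerically rather than as a comparison with a stratum of $\Gr_{\be}(P\oplus I)$, since $N$ need not embed into $P\oplus I$. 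Combined with your lower bound on component dimensions, it gives $\Gr_{\be}(M)=\overline{\mathcal{S}_{[P]}}$ directly, without semicontinuity or \cite{CFR12}.
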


\begin{proof} In the universal quiver Grassmannians ${\rm Gr}_{\bf e}^Q({\bf d})$ over $R_{\bf d}(Q)$, 
the property of a fibre being irreducible of minimal dimension (which is equal to $[P,I]$) is open. 
Since this property holds for $P\oplus I$, it holds for all representations degenerating to $P\oplus I$. 
Conversely, we know that the complement of $\mathcal{U}_{P,I}$ is the union of the closures of the orbits of representations $M_U$ for $U$ ranging over $\mathcal{C}_{\rm simp}$ (see Corollary \ref{cormu}). 
By semicontinuity of fibre dimension, and by semicontinuity of the number of irreducible components of maximal dimension in a flat family \cite[Theorem2]{CFFFR17}, 
it suffices to prove that each ${\rm Gr}_{\bf e}(M_U)$ is either of dimension strictly larger than 
$[P,I]$, or is of this dimension but not irreducible. 
This is provided by Corollary \ref{cors1} and Lemma \ref{lems2}, which yield two different strata of dimension (at least) $[P,I]$.
\end{proof}



\begin{rem}\label{remquestions} The following two questions arise naturally:
\begin{enumerate}
\item Is is true that $\dim {\rm Gr}_{\bf e}(M_U)=[P,I]$?
\item Are the $\mathcal{A}_U$ for $U\in\mathcal{C}_{\rm simp}$ precisely the irreducible components of the complement of $\mathcal{U}_{P,I}$?
\end{enumerate}
At the moment we are not able to answer these questions. The following lemma gives a criterion for verifying that 
$\dim {\rm Gr}_{\bf e}(M) = [P,I]$.
\end{rem}

\begin{lem}\label{lemflatlocus} If $[U,M]\leq [U,I]+1$ for all non-projective indecomposable $U$, then ${\rm Gr}_{\bf e}(M)$ has dimension $[P,I]$.
\end{lem}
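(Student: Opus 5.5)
We bound the dimension stratum by stratum. Every point of ${\rm Gr}_{\bf e}(M)$ lies in some $\mathcal{S}_{[N]}$ with $\bdim N={\bf e}=\bdim P$, and $\dim\mathcal{S}_{[N]}=[N,M]-[N,N]$. So it suffices to show $[N,M]-[N,N]\le [P,I]$ for every $N$ embedding into $M$. Equality is attained by the open stratum, since $M$ is in the family and the generic fibre already has dimension $[P,I]$ by \cite{CFR12}; upper semicontinuity then gives $\dim{\rm Gr}_{\bf e}(M)\geq [P,I]$.

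We work with the Euler form. Since $\bdim M=\bdim P+\bdim I$, we have $[P,I]=\langle {\bf e},\bd-{\bf e}\rangle=\langle N,M\rangle-\langle N,N\rangle$. Hence
\[
[N,M]-[N,N]-[P,I]=[N,M]^1-[N,N]^1 .
\]
The claim therefore reduces to the inequality $[N,M]^1\le [N,N]^1$ for every subrepresentation $N\subset M$ of dimension vector ${\bf e}$.

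To control $[N,M]^1$, decompose $N=N_{\rm pr}\oplus N'$ with $N_{\rm pr}$ projective and $N'\in\mathcal{C}$. Then $[N,M]^1=[N',M]^1$. By the Auslander--Reiten formula, $[N',M]^1=[M,\tau N']$. Alternatively, and more usefully, we can compute it through the minimal projective resolution:
\[
[N',M]^1=[N',M]-\langle N',M\rangle .
\]
Writing $N'=\bigoplus U_k$ with $U_k$ indecomposable non-projective, the hypothesis gives $[U_k,M]\le [U_k,I]+1$. Since $I$ is injective, $\langle U_k,I\rangle=[U_k,I]$. Therefore
\[
[U_k,M]^1=[U_k,M]-\langle U_k,I\rangle-\langle U_k,P\rangle\le 1-\langle U_k,P\rangle=1-[U_k,P]+[U_k,P]^1=1+[U_k,P]^1 ,
\]
using $[U_k,P]=0$ because $U_k$ has no projective summand, so any map to a projective splits off. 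The remaining task is to compare $\sum_k(1+[U_k,P]^1)$ with $[N,N]^1$, which contains the terms $[U_k,N_{\rm pr}]^1$.

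This comparison is the main obstacle, because $N_{\rm pr}$ need not equal $P$. My first attempt is the following. Since $N$ has the same dimension vector as $P$ and $P$ is rigid with minimal $[X,\cdot]$, we have $N\degg$-below $P$, i.e.\ $P\degg N$. Bongartz's criterion then gives $[X,N]\ge[X,P]$ for all $X$, and therefore $[X,N]^1\ge[X,P]^1$ as well (the Euler forms agree). Applying this with $X=U_k$ yields $[U_k,N]^1\ge [U_k,P]^1$.

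This leaves the extra $+1$ per summand. Since $N'\ne0$ forces $N\not\cong P$, I would try to show that $[U_k,N]^1\ge[U_k,P]^1+1$ when $U_k$ is a summand of $N$. Indeed $[U_k,N]-[U_k,P]=[U_k,N]\ge[U_k,U_k]=1$, and the Euler forms agree, so $[U_k,N]^1-[U_k,P]^1=[U_k,N]\ge1$. Summing over $k$ then gives
\[
[N,M]^1=\sum_k[U_k,M]^1\le\sum_k\bigl(1+[U_k,P]^1\bigr)\le\sum_k[U_k,N]^1\le[N,N]^1,
\]
which completes the argument. The step I would double-check most carefully is this per-summand increment when $N'$ has several isomorphic summands; there one uses $[U_k,N]\ge$ the multiplicity, which only strengthens the bound.
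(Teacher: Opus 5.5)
Your proof is correct and is essentially the paper's argument rewritten through the Euler form: your per-summand estimate $[U_k,M]^1\le 1+[U_k,P]^1\le[U_k,N]^1$ is equivalent to the paper's $[U_k,M]-[U_k,N']\le[U_k,I]$, and both come from the hypothesis together with $[U_k,N]\ge 1$; you handle the projective part $N_{\rm pr}$ through the vanishing of ${\rm Ext}^1$ out of projectives, where the paper computes $[N_P,M]-[N_P,N]=[N_P,I]$ directly. The detour through $P\degg N$ is harmless but unnecessary, since your argument only uses the equality $\langle U_k,N\rangle=\langle U_k,P\rangle$ of Euler forms.
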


\begin{proof} Let $M$ be as in the claim, and suppose $N$ is a subrepresentation of $M$ of dimension vector ${\bf e}$. The claim follows once we prove that $[N,M]-[N,N]\leq [P,I]$. We decompose $N=N_P\oplus N'$, where $N_P$ is projective, and $N'\in\mathcal{C}$. Further decomposing
$$N'=U_1\oplus\ldots\oplus U_s$$
into indecomposable direct summands, we can estimate
$$[N',M]-[N',N']=\sum_i[U_i,M]-\sum_{i,j}[U_i,U_j]\leq \sum_i[U_i,I]+s-\sum_{i,j}[U_i,U_j]\leq [N',I].$$
This allows us to estimate
$$[N,M]-[N,N]=[N_P,M]-[N_P,N]+[N',M]-[N',N_P\oplus N']=$$
$$=[N_P,I]+[N',M]-[N',N']=[P,I]-[N',I]+[N',M]-[N',N']\leq [P,I],$$
as claimed.
\end{proof}


\section{Examples}\label{sec:examples}

\subsection{}
Let $Q$ be the equioriented type $A_n$ quiver with the vertices labeled
by $1,\dots,n$ and arrows $i\to i+1$. Let $P=\bigoplus_{i\in Q_0} P_i$
and let $I=\bigoplus_{j\in Q_0} I_j$. 
Then 
\[
\bd=\mathbf{dim}\, (P\oplus I) = (n+1,\dots,n+1),\ \bdim(P)=(1,\dots,n).
\]
The set $\mathcal{C}_{simp}$ consists of simple representations $S_i$, $1\le i\le n-1$,
since the last simple representation $S_n$ is projective and any other indecomposable
admits a subrepresentation without projective summands. 

Let $U=S_i$, $1\le i\le n-1$. Then $A_U$ consists of representations 
$M\in \mathrm{Rep}_{\bd}(Q)$ such that $[S_i,M]>1$; hence, $A_U$ consists of
representations $M$ which are degenerations of 
\[
M_{S_i} = P_1^{n-1}\oplus I_i^2\oplus P_{i+1}^2
\]
The representation $N_U$ \eqref{eq:NU} for $U=S_i$ is obtained from $P$ by replacing the summand
$P_i$ by $P_{i+1}\oplus S_i$ (since $P$ contains the projective cover $P_i$ of $S_i$
as a summand). The representations $N_{S_i}$ and $P$ admit embeddings into $M_{S_i}$, and 
the dimensions of the strata of subrepresentations isomorphic to either $P$ or 
$N_{S_i}$ are equal to $n(n+1)/2$.

We note that all $A_{S_i}$, $i=1,\dots,n-1$ contain the representation
\[
M^{(2)} = \bigoplus_{i\in Q_0} P_i \oplus \bigoplus_{i=1}^{n-1} I_i \oplus 
\bigoplus_{i\in Q_0} S_i
\]
The quiver Grassmannian $\Gr_{\rm\bf dim(P)}(M^{(2)})$ is called in \cite{CFFFR17} the ${\mathrm{mf}}$-degenerate flag variety
(${\mathrm{mf}}$ is for maximally flat). It was shown that $\Gr_{\rm\bf dim(P)}(M^{(2)})$
is reducible and equidimensional of the expected dimension 
$n(n+1)/2$ (with the Catalan number of irreducible
components). In particular, this implies that each quiver Grassmannian 
$\Gr_{\rm\bf dim(P)}(M_U)$, $U\in \mathcal{C}_{simp}$ is also of dimension 
$n(n+1)/2$.

\subsection{}
Let $Q$ be a $A_4$ quiver with the following orientation:
\[
\begin{tikzcd}[scale cd = 0.7]
1\arrow[r] & 2 & \arrow[l] 3\arrow[r] & 4. 
\end{tikzcd}	
\]
We fix $P=\mathbb{C}Q$ and $I=\mathbb{C}Q^\ast$, so  $\bd={\rm\bf dim}(P\oplus I)=(3,4,4,3)$ and 
$\mathbf{e}={\rm\bf dim}(P)=(1,3,1,2)$. Thus, $\langle{\bf e},{\bf d-e}\rangle=[P,I]=7$.

The set $\mathcal{C}_{simp}$ consists of five elements, which we denote in a standard way 
by $U(1)$, $U(3)$, $U(23)$, $U(34)$, and $U(14)$ (for example, 
$\dim U(14)=(1,1,1,1)$). Let us consider all five cases.

\underline{Let $U=U(1)=S_1$}. Then   $[U,I]=1$ and hence $A_U$ consist of $M\in R_\bd(Q)$ such that
$[S_1,M]\ge 2$. The most general representation satisfying this property is 
$M_U=U(14)\oplus U(1)^2\oplus U(24)^2\oplus U(23)$.
The projective resolution for $S_1$ is $0\to P_2\to P_1\to S_1\to 0$.
Since $P\supset P_1$, 
the subrepresentation $N_U$ is given by $N_U=U(1)\oplus U(2)^2\oplus U(24)\oplus U(4)$.

\underline{Let $U=U(3)=S_3$}. Then $[U,I]=1$ and hence $A_U$ consist of $M\in R_\bd(Q)$ such that
$[S_3,M]\ge 2$. The most general representation satisfying this property is 
$M_U=U(14)\oplus U(12)^2\oplus U(24)\oplus U(3)^2\oplus U(4)$.
The projective resolution for $S_3$ is $0\to S_2\oplus S_4\to P_3 \to S_3\to 0$.
Since $P\supset P_3$, the subrepresentation
$N_U$ is given by $N_U=U(12)\oplus U(2)^2\oplus U(3)\oplus U(4)^2$, which also embeds into
$M_U$ with $\dim S_{[N_U]}=7$.

\underline{Let $U=U(23)$}. Then $[U,I]=2$ and hence $A_U$ consist of $M\in R_\bd(Q)$ such that
$[U(23),M]\ge 3$. The most general representation satisfying this property is 
$M_U=U(14)\oplus U(13)^2\oplus U(23)\oplus U(4)^2$.
The projective resolution for $U(23)$ is $0\to P_4\to P_3 \to U(23)\to 0$.
Since $P\supset P_3$, the subrepresentation
$N_U$ is given by $N_U=U(12)\oplus U(2)\oplus U(23)\oplus U(4)^2$, which also embeds into
$M_U$.

\underline{Let $U=U(34)$}. Then $[U,I]=2$ and hence $A_U$ consist of $M\in R_\bd(Q)$ such that
$[U(34),M]\ge 3$. The most general representation satisfying this property is 
$M_U=U(12)^3\oplus U(34)^3\oplus U(23)$.
The projective resolution for $U(34)$ is $0\to P(2)\to P(3) \to U(34)\to 0$.
Since $P\supset P(3)$,  
$N_U$ is given by $N_U=U(12)\oplus U(2)^2\oplus U(34)\oplus U(4)$.

\underline{Let $U=U(14)$}. Then $[U,I]=4$ and hence $A_U$ consist of $M\in R_\bd(Q)$ such that
$[U(14),M]\ge 5$. The most general representation satisfying this property is 
$M_U=U(13)\oplus U(14)\oplus U(1)\oplus U(2)^2\oplus U(34)^2$.
The projective resolution for $U(14)$ is $0\to P_2\to P_1\oplus P_3 \to U(14)\to 0$.
Since $P\supset P_1\oplus P_3$, the subrepresentation 
$N_U$ is given by $N_U=U(14)\oplus U(2)^2\oplus U(4)$.

In all the cases above, the representations $P$ and $N_U$ embed into $M_U$ and 
the dimensions of the strata $S_{[P]}, S_{[N_U]}\subset \Gr_{{\rm\bf dim}(P)}(M_U)$ are equal to $7$.

Computer experiments show that there exists a unique most degenerate representation $M^2$
such that the quiver Grassmannian 
$\Gr_{{\rm \bf dim}(P)}(M^{(2)})$ is of expected dimension $[P,I]$ and any other $M\in R_\bd(Q)$ with this property
degenerates to $M^{(2)}$. The representation $M^{(2)}$ is explicitly given by 
\[
M^{(2)}=U(12)\oplus U(23)\oplus U(34)\oplus U(1)^2\oplus U(2)^2\oplus U(3)^2\oplus U(4)^2.
\]
One verifies that for any $U\in \mathcal{C}_{simp}$ the following inequality holds:
$[U,M^{(2)}]>[U,I]$. Hence $M^{(2)}$ belongs to $A_U$ for all $U$ and so
$\dim \Gr_{{\rm\bf dim}(P)}(M_U)=[P,I]$ for all $U\in \mathcal{C}_{simp}$.

\begin{example}
Let $Q$ be a $D_4$ quiver with the following orientation
\[
\begin{tikzcd}[scale cd = 0.7]
& & 3 \arrow[dl] \\
1 \arrow[r] & 2 &  \\
& & 4 \arrow[ul]
\end{tikzcd}	
\]
We fix $P=\bigoplus_{i\in Q_0} P_i$, $I=\bigoplus_{j\in Q_0} I_j$, so  
$\bd={\rm\bf dim}(P\oplus I)=(3,5,3,3)$ and 
$\mathbf{e}={\rm\bf dim}(P)=(1,4,1,1)$ and $\langle{\bf e},{\bf d-e}\rangle=[P,I]=7$.

The set $\mathcal{C}_{simp}$ consists of seven (out of twelve) indecomposable
representations. Explicitly,
\[
\mathcal{C}_{simp}=\{U(1),U(3),U(4),U(123),U(124),U(234),U(12^234)\}
\]
(the last one is of total dimension five).
For the representations $U=U(1),U(3),U(4)$ the value of $[U,I]$ is equal to $1$, for
$U=U(12),U(23),U(24)$ the value of $[U,I]$ is equal to $3$, and
$[U(12^234),I]=5$.

Computer experiments show that in this case there exists a unique most degenerate representation $M^{(2)}$
with $\dim\Gr_\be(M^{(2)})=7$, which is explicitly given by
\[
M^{(2)} = U(1)^2\oplus U(3)^2\oplus U(4)^2\oplus U(2)^2\oplus U(12)\oplus
U(23)\oplus U(24).
\]
One verifies that for $U=U(1),U(3),U(4)$ one has $[U,M^{(2)}]=2$, 
for $U=U(12)$, $U(23)$, $U(24)$ one has $[U,M^{(2)}]=4$, and 
$[U(12^234),M^{(2)}]=6$. So $M^{(2)}\in A_U$ for all $U\in \mathcal{C}_{simp}$ and hence
$\dim {\rm Gr}_{\be}(M_U)=[P,I]=7$ for all $U\in \mathcal{C}_{simp}$.
\end{example}

\begin{example}
Let $Q$ be a $D_4$ quiver with the following orientation
\[
\begin{tikzcd}[scale cd = 0.7]
& & 3  \\
1 \arrow[r] & 2\arrow[ur]\arrow[dr] &  \\
& & 4 
\end{tikzcd}	
\]
We fix $P=\bigoplus_{i\in Q_0} P_i$, $I=\bigoplus_{j\in Q_0} I_j$, so  
$\bd={\rm \bf dim}(P\oplus I)=(5,5,4,4)$ and 
$\mathbf{e}={\rm\bf dim}(P)P=(1,2,3,3)$ and $\langle{\bf e},{\bf d-e}\rangle=[P,I]=9$.

The set $\mathcal{C}_{simp}$ consists of six elements given by
\[
\mathcal{C}_{simp}=\{U(1),U(2),U(23),U(24),U(234),U(1234)\}.
\]
The values of $[U,I]$, $U\in \mathcal{C}_{simp}$ are given by $[U(1),I]=[U(2),I]=1$,
\[
[U(23),I]=[U(24),I]=2,\ [U(123),I]=3,\ [U(1234),I]=4.
\]
Computer experiments show that in this case there exist three most degenerate representations $M^{(2),a}$
with $\dim\Gr_\be(M^{(2)})=7$, which are explicitly given by
\begin{gather*}
M^{(2),1} = U(1)^2\oplus U(12)\oplus U(2)\oplus U(123)\oplus U(3)^2\oplus
U(24)\oplus U(4)^2 \oplus U(1234),\\
M^{(2),2} = U(1)^2\oplus U(12)\oplus U(2)\oplus U(23)\oplus U(3)^2\oplus
U(124)\oplus U(4)^2 \oplus U(1234),\\
M^{(2),3} = U(1)^2\oplus U(12)\oplus U(123)\oplus U(23)\oplus U(3)^2\oplus
U(124)\oplus U(24) \oplus U(4)^2.
\end{gather*}
One checks that for all $U\in \mathcal{C}_{simp}$ and
$a=1,2,3$ one has $[U,M^{(2),a}]>[U,I]$.
So $M^{(2),a}\in A_U$ for $a=1,2,3$ and for all $U\in \mathcal{C}_{simp}$, and hence
$\dim \Gr_{\bf}(M_U)=[P,I]=7$ for all $U\in \mathcal{C}_{simp}$.
\end{example}

\section{Prehomogeneity}\label{sec:prehom}
One might ask whether for $M^0$ the generic extension of a given $I$ by a given $P$, the group ${\rm Aut}(M^0)$ acts on ${\rm Gr}_{{\rm\bf dim}(P)}(M^0)$ with an open orbit. We translate this prehomogeneity question to another one which is more tractable.

For three representations $M$, $N$ and $X$, consider the variety $W(M,X,N)$ of exact sequences, given as the locally closed subset
$$\{(\alpha,\beta)\, |\, \alpha\mbox{ injective}, \,\beta\mbox{ surjective},\, \beta\alpha=0\}\subset{\rm Hom}(M,X)\times{\rm Hom}(X,N).$$

The group ${\rm Aut}(M)\times{\rm Aut}(X)\times{\rm Aut}(N)$ acts on $W(M,X,N)$ via
$$(\psi_M,\varphi,\psi_n)(\alpha,\beta)=(\varphi\alpha\psi_M^{-1},\psi_N\beta\varphi^{-1}).$$
The action of ${\rm Aut}(M)\times{\rm Aut}(N)$ is free, whereas the stabilizer of ${\rm Aut}(X)$ on $(\alpha,\beta)$ equals $1+\alpha{\rm Hom}(N,M)\beta$. Namely, if $\varphi\alpha=\alpha$ and $\beta\varphi=\beta$, then $(\varphi-1)\alpha=0$, thus $\varphi=1+h\beta$ for some $h\in{\rm Hom}(N,X)$. Then $\beta=\beta\varphi=\beta(1+h\beta)$, thus $\beta h\beta=0$, thus $\beta h$=0, thus $h=\alpha k$ for some $k\in{\rm Hom}(N,M)$.

Inside the vector space ${\rm Ext}^1(N,M)$, consider the subset $\mathcal{E}_{[X]}$ of extension classes $\zeta$ whose corresponding short exact sequence has middle term isomorphic to $X$. We claim that this is a locally closed subset. Namely, consider the linear map
$$\Phi_{M,N}:\bigoplus_{i\in Q_0}{\rm Hom}_\mathbb{C}(N_i,M_i)\rightarrow\bigoplus_{\alpha:i\rightarrow j}{\rm Hom}_\mathbb{C}(N_i,M_j)$$
given by
$$\Phi_{M,N}(f_i)=(M_\alpha f_i-f_jN_\alpha)_{\alpha:i\rightarrow j}.$$
Its cokernel is naturally isomorphic to ${\rm Ext}^1(N,M)$. Its target space 
naturally embeds into $R_{{\rm\bf dim}(X)}(Q)$ by mapping a tuple $(\zeta_\alpha)_\alpha$ to the point
$$\left[\begin{array}{cc}M_\alpha&\zeta_\alpha\\ 0&N_\alpha\end{array}\right].$$
Via these maps, the stratification of $R_{{\rm\bf dim}(X)}(Q)$ by orbits induces a stratification by locally closed subsets of ${\rm Ext}^1(N,M)$ by isomorphism class of the middle term of the associated exact sequence.

 We then find a well-defined map $W(M,X,N)\rightarrow\mathcal{E}_{[X]}$ given by associating to $(\alpha,\beta)$ the class of the corresponding exact sequence $$\zeta(\alpha,\beta)\, :\, 0\rightarrow M\stackrel{\alpha}{\rightarrow}X\stackrel{\beta}{\rightarrow}N\rightarrow 0.$$

By definition of ${\rm Ext}^1(N,M)$, this is a quotient map for the ${\rm Aut}(X)$-action, and it is also ${\rm Aut}(M)\times{\rm Aut}(N)$-equivariant.

On the other hand, consider the Grassmannian ${\rm Gr}_{{\rm\bf dim}(M)}(X)$ and its locally  closed subset $\mathcal{S}_{[M],[N]}$ of subrepresentations $U\subset X$ such that $U$ is isomorphic to $M$, with quotient $X/U$ isomorphic to $N$. Then we have a well-defined map $W(M,X,N)\rightarrow\mathcal{S}_{[M],[N]}$ associating to $(\alpha,\beta)$ the subrepresentation $U(\alpha,\beta)=\alpha(M)$. This is a quotient map for the ${\rm Aut}(M)\times{\rm Aut}(N)$-action, which is ${\rm Aut}(X)$-equivariant.

We thus obtain a diagram
$${\rm Gr}_{{\rm\bf dim}(M)}(X)\supset\mathcal{S}_{[M],[N]}\leftarrow W(M,X,N)\rightarrow \mathcal{E}_{[X]}\subset{\rm Ext}^1(N,M)$$
of quotient maps. This discussion shows the following lemma.

\begin{lem} The above diagram induces a bijection between ${\rm Aut}(X)$-orbits in $\mathcal{S}_{[M],[N]}$ and ${\rm Aut}(M)\times{\rm Aut}(N)$-orbits in $\mathcal{E}_{[X]}$ respecting orbit closures.
\end{lem}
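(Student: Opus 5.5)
The plan is to pass from $W=W(M,X,N)$ to each side through the two quotient maps. The lemma then reduces to a standard fact about a space carrying two commuting free-ish actions.

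\emph{Step 1: structure of the two maps.} Write $p:W\to\mathcal{S}_{[M],[N]}$ for $(\alpha,\beta)\mapsto\alpha(M)$ and $q:W\to\mathcal{E}_{[X]}$ for $(\alpha,\beta)\mapsto\zeta(\alpha,\beta)$.

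For $p$: two pairs with the same image subrepresentation differ by a unique element of ${\rm Aut}(M)\times{\rm Aut}(N)$. Indeed, $\alpha$ is determined up to an automorphism of $M$ once its image is fixed, and $\beta$ is then a cokernel map, hence determined up to an automorphism of $N$. Moreover $p$ is surjective onto $\mathcal{S}_{[M],[N]}$ by definition of that set. So $p$ is a principal ${\rm Aut}(M)\times{\rm Aut}(N)$-bundle. To see that it is locally trivial, I would use local sections: over the open chart of the Grassmannian where a fixed complement is transversal, choose an isomorphism $M\to U$ depending algebraically on $U$. Alternatively, it is enough to note that $p$ is a surjective equivariant morphism whose fibres are single orbits of a connected group; this already suffices for the topological statements needed below.

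For $q$: surjectivity onto $\mathcal{E}_{[X]}$ holds by definition. The fibres are ${\rm Aut}(X)$-orbits, since two short exact sequences with the same class and the same middle term $X$ are related by an isomorphism of middle terms that is the identity on the ends. The map $q$ is smooth and open, because it is the restriction to a preimage of the quotient of an affine space by the linear map $\Phi_{M,N}$ pulled back along the orbit map.

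Each map is equivariant for the action of the other group: $p$ is ${\rm Aut}(X)$-equivariant, and $q$ is ${\rm Aut}(M)\times{\rm Aut}(N)$-equivariant.

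\emph{Step 2: the bijection.} Let $G={\rm Aut}(M)\times{\rm Aut}(X)\times{\rm Aut}(N)$. Then ${\rm Aut}(X)$-orbits in $\mathcal{S}_{[M],[N]}$ correspond to $G$-orbits in $W$ via $O\mapsto p^{-1}(O)$, and $G$-orbits in $W$ correspond to ${\rm Aut}(M)\times{\rm Aut}(N)$-orbits in $\mathcal{E}_{[X]}$ via $O'\mapsto q(O')$. Both assignments are bijections, because each fibre of $p$ (resp.~$q$) is exactly one orbit of the complementary group and the two actions commute.

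\emph{Step 3: closures.} This is the main obstacle: we must show $p^{-1}(\overline{O})=\overline{p^{-1}(O)}$ inside $W$, where closures are taken in $\mathcal{S}_{[M],[N]}$ and in $W$, and similarly for $q$. The identity follows once $p$ and $q$ are open maps, since for an open surjection the preimage of a closure is the closure of the preimage. For $q$, openness is the surjectivity onto a linear quotient noted above. For $p$, I would argue that it is a locally trivial bundle as in Step 1, hence open.

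Combining the two identities, $\overline{O_1}\supseteq O_2$ holds if and only if the corresponding $G$-orbit closures in $W$ contain one another, which in turn holds if and only if the corresponding orbit closures in $\mathcal{E}_{[X]}$ do. This yields the lemma.
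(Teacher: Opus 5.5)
Your proposal is correct and follows the paper's route. The paper's only argument is the preceding discussion, which observes that $W(M,X,N)\to\mathcal{S}_{[M],[N]}$ and $W(M,X,N)\to\mathcal{E}_{[X]}$ are quotient maps for two commuting group actions, and your Steps 2--3 spell out why this gives a closure-preserving bijection (through $G$-orbits on $W$ and openness of the two maps). One correction: drop the ``alternatively'' remark in Step 1, because a surjective morphism whose fibres are single orbits of a connected group need not be open (with the trivial group, a bijective morphism can fail to be open), so openness of $p$ really rests on the local triviality you describe.
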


In particular, ${\rm Aut}(X)$ admits an open orbit in $\mathcal{S}_{[M],[N]}$ if and only if ${\rm Aut}(M)\times{\rm Aut}(N)$ admits an open orbit in $\mathcal{S}_{[X]}$.

Now consider the particular case where $M=P$ is projective, $N=I$ is injective, and $M^0$ is the exceptional representation of dimension vector ${\bf d}={\rm\bf dim}(M)+{\rm\bf dim}(N)$. Then we know that ${\rm Gr}_{{\rm\bf dim}(P)}(M^0)$ has dimension $[P,I]$, that the stratum of subrepresentations isomorphic to $P$ has this dimension, and that, dually, the stratum of subrepresentations with quotient isomorphic to $I$ has this dimension. In the above language, we thus find that $\mathcal{S}_{[P],[I]}$ is a dense stratum in ${\rm Gr}_{{\rm\bf dim}(P)}(M^0)$. Thus ${\rm Aut}(M^0)$ admits an open orbit in this quiver Grassmannian if and only if it admits an open orbit in this stratum. On the other hand, by the definition of $\mathcal{E}_{[M^0]}\subset{\rm Ext}^1(I,P)$, we find that $\mathcal{E}_{[M^0]}$ is dense, and thus ${\rm Aut}(P)\times{\rm Aut}(I)$ admits an open orbit in ${\rm Ext}^1(I,P)$ if and only if it does so in $\mathcal{E}_{[M^0]}$. Combined with the previous lemma, we thus find:
\begin{cor}\label{cor:Ext(I,P)} 
${\rm Aut}(X)$ admits an open orbit in ${\rm Gr}_{{\rm\bf dim}(P)}(M^0)$ if and only if the group ${\rm Aut}(P)\times{\rm Aut}(I)$ admits an open orbit in the vector space ${\rm Ext}^1(I,P)$.
\end{cor}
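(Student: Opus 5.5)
The corollary is obtained by stringing together three equivalences, each already prepared in the discussion above; here ${\rm Aut}(X)$ is to be read as ${\rm Aut}(M^0)$.

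\textbf{Step 1: reduce the Grassmannian to the stratum $\mathcal{S}_{[P],[I]}$.} I would first show that $\mathcal{S}_{[P],[I]}$ is open and dense in ${\rm Gr}_{{\rm\bf dim}(P)}(M^0)$.
\begin{itemize}
\item By \cite{CFR12}, ${\rm Gr}_{{\rm\bf dim}(P)}(M^0)$ is irreducible of dimension $[P,I]$.
\item The stratum $\mathcal{S}_{[P]}$ has dimension $[P,M^0]-[P,P]$. Since $M^0$ degenerates to $P\oplus I$ and $[P,\,\cdot\,]$ is additive on exact sequences, $[P,M^0]=[P,P\oplus I]$, so this dimension is $[P,I]$. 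Hence $\mathcal{S}_{[P]}$ is dense.
\item Dually, the subrepresentations $U$ with $M^0/U\simeq I$ form a locally closed subset of dimension $[M^0,I]-[I,I]=[P,I]$, using the exactness of ${\rm Hom}(\,\cdot\,,I)$. So this set is dense too.
\item In an irreducible variety two dense constructible sets meet in a dense set, so $\mathcal{S}_{[P],[I]}$ is dense. It is open because the isomorphism-type strata form a finite stratification.
\end{itemize}
Consequently, an ${\rm Aut}(M^0)$-orbit is open in the Grassmannian if and only if it is open in $\mathcal{S}_{[P],[I]}$. One direction is immediate from openness of the stratum. For the other, any open orbit of the whole Grassmannian meets the dense stratum and is contained in it, since the stratum is ${\rm Aut}(M^0)$-stable.

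\textbf{Step 2: pass to ${\rm Ext}^1(I,P)$ via the preceding lemma.} Apply the lemma with $M=P$, $X=M^0$, $N=I$. It gives a bijection between ${\rm Aut}(M^0)$-orbits in $\mathcal{S}_{[P],[I]}$ and ${\rm Aut}(P)\times{\rm Aut}(I)$-orbits in $\mathcal{E}_{[M^0]}$ that respects closures. Closure-preservation is what carries "open orbit" across the bijection, because an orbit is open exactly when it is dense in its irreducible ambient set.

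\textbf{Step 3: show that $\mathcal{E}_{[M^0]}$ is dense in ${\rm Ext}^1(I,P)$.}
\begin{itemize}
\item The embedding of the target of $\Phi_{P,I}$ into $R_{\bf d}(Q)$ is an affine-linear map. Its image meets the open orbit $G_{\bf d}M^0$, because some extension of $I$ by $P$ has middle term $M^0$ (for instance the generic extension $I\ast P$, which degenerates to every extension, in particular to $M^0$).
\item Hence the preimage of $G_{\bf d}M^0$ is a nonempty open subset of a vector space, so it is dense. It is also stable under translation by ${\rm Im}\,\Phi_{P,I}$.
\item Therefore its image $\mathcal{E}_{[M^0]}$ in the cokernel ${\rm Ext}^1(I,P)$ is open and dense.
\end{itemize}
An ${\rm Aut}(P)\times{\rm Aut}(I)$-orbit is then open in $\mathcal{E}_{[M^0]}$ if and only if it is open in ${\rm Ext}^1(I,P)$, by the same argument as at the end of Step 1.

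\textbf{Main obstacle.} The delicate point is the bookkeeping in Step 1: checking that both dimension counts equal $[P,I]$ and that density is preserved under intersection. This relies on irreducibility from \cite{CFR12}. I would state that input explicitly rather than reprove it.
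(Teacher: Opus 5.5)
Your proposal is correct and follows the paper's own argument. It uses the same three steps: density (hence openness) of $\mathcal{S}_{[P],[I]}$ in ${\rm Gr}_{{\rm\bf dim}(P)}(M^0)$ from the two dimension counts and irreducibility, then the orbit bijection of the preceding lemma, then density of $\mathcal{E}_{[M^0]}$ in ${\rm Ext}^1(I,P)$; you simply supply more detail.

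One small repair: the parenthetical in Step 3 is circular as written, since ``$I\ast P$ degenerates to every extension, in particular to $M^0$'' presupposes that $M^0$ is an extension of $I$ by $P$. That fact is exactly the nonemptiness of $\mathcal{S}_{[P],[I]}$ from your Step 1. There, before invoking the dimension formulas, you should justify nonemptiness of the two strata by Proposition~\ref{constremb} for $P$ and its dual for $I$.
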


Let us consider $Q$ given by $1\rightarrow 2\rightarrow\ldots\rightarrow n$. We would like to make the action of ${\rm Aut}(P)\times{\rm Aut}(I)$ on ${\rm Ext}^1(I,P)$ explicit. To do this, we consider the case of indecomposables first. We have ${\rm Hom}(P_i,P_{i'})\not=0$ if and only $i\geq i'$, and  choose a generator $\iota_{i,i'}$ in this case. Similarly, ${\rm Hom}(I_j,I_{j'})\not=0$ if and only if $j\geq j'$, and we choose a generator $\pi_{j,j'}$. The space ${\rm Ext}^1(I_j,P_i)$ is one-dimensional if and only if $1<i\leq j+1\leq n$; let $\zeta_{i,j}$ be a generator in this case, which corresponds to a short exact sequence
$$0\rightarrow P_i\rightarrow P_1\oplus U_{i,j}\rightarrow I_j\rightarrow 0.$$ We then have
$$\iota_{i,i'}\zeta_{i,j}=\zeta_{i',j}$$
for $1<i'\leq i\leq j+1\leq n$ and
$$\zeta_{i,j}\pi_{j',j}=\zeta_{i,j'}$$
for $1\leq i-1\leq j\leq j'<n$.

Now assume $P=\bigoplus_iP_i^{a_i}$ and $I=\bigoplus_iI_i^{b_i}$. Then ${\rm Aut}(P)$ can be identified with the parabolic of block upper triangular matrices with block sizes $a_1,\ldots,a_n$, and ${\rm Aut}(I)$ can be identified with the parabolic of block upper triangular matrices with block sizes $b_1,\ldots,b_n$. The space ${\rm Ext}^1(I,P)$ can be identified with block matrices having a matrix of size $a_i\times b_j$ as $(i,j)$-th block if $1<i\leq j+1\leq n$, and zero block otherwise. We then see that ${\rm Aut}(P)$ acts on ${\rm Ext}^1(I,P)$ through its quotient ${\rm Aut}(\bigoplus_{i>1}P_i^{a_i})$, and ${\rm Aut}(I)$ acts on ${\rm Ext}^1(I,P)$ through its quotient ${\rm Aut}(\bigoplus_{i<n}I_i^{b_i})$, and this action is given by the left and right action of upper triangular block matrices

$${\rm Aut}(\bigoplus_{i>1}P_i^{a_i})=\left[\begin{array}{ccc}{\rm GL}_{a_2}(\mathbb{C}){\rm id}_{P_1}&\ldots&M_{a_2\times a_n}(\mathbb{C})\iota_{n,2}\\
\vdots&&\vdots\\ 0&\ldots&{\rm GL}_{a_n}(\mathbb{C}){\rm id}_{P_n}\end{array}\right],$$
respectively
$${\rm Aut}(\bigoplus_{i<n}I_i^{b_i})=\left[\begin{array}{ccc}{\rm GL}_{b_1}(\mathbb{C}){\rm id}_{I_1}&\ldots&M_{b_1\times b_{n-1}}(\mathbb{C})\pi_{n-1,1}\\
\vdots&&\vdots\\ 0&\ldots&{\rm GL}_{b_{n-1}}(\mathbb{C}){\rm id}_{I_{n-1}}\end{array}\right],$$
on
$${\rm Ext}^1(I,P)=\left[\begin{array}{ccc}M_{a_2\times b_1}(\mathbb{C})\zeta_{2,1}&\ldots& M_{a_2\times b_{n-1}}(\mathbb{C})\zeta_{2,n-1}\\ \vdots&&\vdots\\ 0&\ldots&M_{a_n\times b_{n-1}}(\mathbb{C})\zeta_{n,n-1}\end{array}\right],$$
which indeed admits an open orbit (since even for the action via left and right multiplication of such parabolics on all matrices, we have finitely many orbits, thus an open one). We have thus proved using the following corollary:

\begin{thm} If $Q$ is a linearly oriented type $A_n$ quiver, then ${\rm Aut}(X)$ admits an open orbit in ${\rm Gr}_{{\rm\bf dim}(P)}(X)$ for all $P$ and $I$.
\end{thm}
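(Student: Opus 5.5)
The plan is to reduce the statement to Corollary \ref{cor:Ext(I,P)}: here $X$ is to be read as the exceptional representation $M^0$ of dimension vector ${\rm\bf dim}(P)+{\rm\bf dim}(I)$, i.e. the generic member of the family. By that corollary it suffices to show that ${\rm Aut}(P)\times{\rm Aut}(I)$ acts with an open orbit on the vector space ${\rm Ext}^1(I,P)$.

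To see this, I would first fix the explicit description computed just above. For $P=\bigoplus_i P_i^{a_i}$ and $I=\bigoplus_j I_j^{b_j}$, the space ${\rm Ext}^1(I,P)$ is a space of block matrices whose $(i,j)$ block is $a_i\times b_j$ and is allowed to be nonzero only for $2\le i\le j+1\le n$. The group ${\rm Aut}(P)$ acts through block upper triangular matrices with diagonal blocks ${\rm GL}_{a_i}$ for $i\ge2$, and ${\rm Aut}(I)$ acts through block upper triangular matrices with diagonal blocks ${\rm GL}_{b_j}$ for $j\le n-1$. The relations $\iota_{i,i'}\zeta_{i,j}=\zeta_{i',j}$ and $\zeta_{i,j}\pi_{j',j}=\zeta_{i,j'}$ ensure that the off-diagonal blocks act by the corresponding block row and column operations. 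The allowed region is therefore the set of blocks on or above a shifted diagonal. This region is closed under the row operations (moving from row $i$ to rows $i'<i$) and the column operations (moving from column $j$ to columns $j'>j$) that the two parabolics induce, and the action is just $(g,h)\cdot E=gEh^{-1}$ restricted to this pattern.

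Next, I would show the action has finitely many orbits, which forces an open orbit because the vector space is irreducible. I would argue directly with a rank-type normal form. Using the parabolic $P_a\subset{\rm GL}(\sum a_i)$ on the left and $P_b$ on the right, one can run block Gaussian elimination. Processing rows from the bottom up and columns from left to right, a pivot in block $(i,j)$ clears the entries above it in its column and to the right of it in its row. This stays inside the allowed pattern, since the pattern is upward and rightward closed. The result is that every element can be brought to a partial permutation matrix supported in the allowed region, exactly as in the classical finiteness of $P_a\backslash M_{m\times n}/P_b$ (partial permutation matrices for a Bruhat-type decomposition). Since there are finitely many such normal forms, the orbits are finite in number. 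Equivalently, the orbits are determined by the ranks of the southwest-type corner submatrices intersected with the pattern.

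The step I expect to be the main obstacle is checking that the elimination never leaves the allowed pattern and that the group genuinely acts by all block-triangular operations. The latter means that the off-diagonal entries of ${\rm Aut}(P)$ and ${\rm Aut}(I)$ really realize arbitrary block additions between the relevant blocks. Both points follow from the displayed compatibilities of the $\iota$, $\pi$ and $\zeta$ generators. If the direct normal-form argument becomes messy, the fallback is to embed the problem into the full matrix space: there $P_a\times P_b$ has finitely many orbits, and the allowed region is a $P_a\times P_b$-stable subspace (it is closed under the induced operations), so it is a finite union of orbits as well. In either version the irreducible vector space ${\rm Ext}^1(I,P)$ has finitely many orbits, one of them is open, and the theorem follows from Corollary \ref{cor:Ext(I,P)}.
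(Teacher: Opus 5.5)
Your proposal is correct and follows the paper's proof. You reduce via Corollary~\ref{cor:Ext(I,P)} (with $X=M^0$) to the action of the two block upper triangular parabolics on the pattern space ${\rm Ext}^1(I,P)$, and conclude from finiteness of orbits. Your fallback, that this pattern space is a stable subspace of the full matrix space on which the parabolics have finitely many orbits, is exactly the paper's one-line justification; your elimination to partial permutation normal forms just makes that finiteness explicit.
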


\begin{example} From Corollary \ref{cor:Ext(I,P)} we see that $[P,P]+[I,I]\geq [I,P]^1$ is clearly a necessary condition for prehomogeneity. If $Q$ is not of linearly oriented type $A_n$, then $[I,P]=0$ for all $P$ and $I$, and thus we arrive at the necessary condition 
\[
\langle{\rm\bf dim}(P),{\rm\bf dim}(P)\rangle+\langle{\rm\bf dim} (I),{\rm\bf dim}(I)\rangle+\langle{\rm\bf dim}(I),{\rm\bf dim} (P)\rangle\geq 0.
\]
It is easy to see that this inequality already fails for $P=A$, $I=A^*$ if $Q$ is of type $A_5$ or $D_5$ with alternating orientation.
\end{example}


\begin{thebibliography}{99}

\bibitem[Bo96]{Bo96}
K.~Bongartz, On Degenerations and Extensions of Finite Dimensional Modules, Adv. Math.
121 (1996), 245–287.


\bibitem[CB92]{CB92} W.~Crawley-Boevey, \emph{Lectures on representations of quivers}. Preprint 1992. Available at the author's webpage. 


\bibitem[CR00]{CR00}
P.~Caldero and M.~Reineke, On the quiver {G}rassmannian in the acyclic
case. \emph{J. Pure Appl. Algebra} \textbf{212} (2008), no.~11, 2369--2380.

\bibitem[CI20]{CI20}
G~Cerulli~Irelli, \emph{Three lectures on quiver Grassmannians}, Contemporary Mathematics \textbf{758}, American mathematical Society, 2020.



\bibitem[CFFFR17]{CFFFR17}
G.~Cerulli Irelli, X.~Fang, E.~ Feigin, G.~Fourier, M.~Reineke, Linear degeneration of flag varieties. \emph{Math.~Z.} \textbf{287} (2017), no. 1-2, 615--654. 

\bibitem[CFFFR20]{CFFFR20}
G.~Cerulli Irelli, X.~Fang, E.~ Feigin, G.~Fourier, M.~Reineke, 
Linear degenerations of flag varieties: partial flags, defining equations, and group actions. 
\emph{Math. Z.} \textbf{296} (2020), no. 1-2, 453--477.

\bibitem[CFR12]{CFR12} G.~Cerulli Irelli, E.~ Feigin, M.~Reineke, Quiver Grassmannians and degenerate flag varieties. 
\emph{Algebra \& Number Theory} \textbf{6} (2012), no.~1, 165--194. 

\bibitem[CFR13-1]{CFR13-1} G.~Cerulli Irelli, E.~Feigin, M.~Reineke, Degenerate flag varieties: moment graphs 
and Schr\"oder numbers, \emph{J. Algebraic Combin.} \textbf{38} (2013), no.~1.


\bibitem[Fed25]{Fed25} 
S.~Fedotov,
Quiver representation library, \href{https://github.com/st-fedotov/quiver}
{https://github.com/st-fedotov/quiver}.

\bibitem[FedFe25]{FedFe25}
S.~Fedotov, E.~Feigin, \emph{PrIncipal quiver Grassmannians: conjectures}, https://arxiv.org/abs/2512.09731.

\bibitem[Fe11]{Fe11} 
E.~Feigin,  Degenerate flag varieties and the median Genocchi numbers.
\emph{Mathematical Research Letters} \textbf{18} (2011), no. 6, 1--16.


\bibitem[Fe12]{Fe12}
{E.~Feigin}, ${\mathbb G}_a^M$ degeneration of flag varieties.
\emph{Selecta Mathematica} \textbf{18}:3 (2012), 513--537.


\bibitem[Fe23]{Fe23} 
E.~Feigin, 
PBW degenerations, quiver Grassmannians, and toric varieties,
2023, International Congress of Mathematicians (vol. 4), EMS Press, Berlin, p. 2930--2946.


\bibitem[FFR17]{FFR17}
E.~Feigin, M.~Finkelberg, M.~Reineke, 
Degenerate affine Grassmannians and loop quivers. 
\emph{Kyoto J. Math.} \textbf{57} (2017), no.~2, 445--474. 


\bibitem[LW19]{LW19}
O.~Lorscheid, T.~Weist, Pl\"ucker relations for quiver Grassmannians. \emph{Algebr. Represent. Theory} \textbf{22} (2019), no.~1, 211--218.



\bibitem[Re13]{Re13}
M.~Reineke, Every projective variety is a quiver Grassmannian. \emph{Algebr. Represent. Theory} \textbf{16} (2013), 1313--1314. 

\bibitem[Re01]{Re01}
M.~Reineke, Generic extensions and multiplicative bases of quantum groups at q=0. \emph{Represent. Theory} \textbf{5} (2001), 147--163. 


\bibitem[Ri18]{Ri18}
C.~M.~Ringel, Quiver Grassmannians for wild acyclic quivers.  \emph{Proc.~AMS} \textbf{146} (2018), no.~5, 1873--1877. 

\bibitem[Schi14]{Schi14} 
R.~Schiffler, Quiver Representations, CMS Books in Mathematics, Springer-Verlag, 
Switzerland, 2014.




\end{thebibliography}
\end{document}